\DeclareFontFamily{U}{shuffle}{}
\DeclareFontShape{U}{shuffle}{m}{n}{ <-8>shuffle7 <8->shuffle10}{}
\newcommand\Res{{\rm Res}}
\newcommand{\bfk}{{\boldsymbol{\sl{k}}}}
\newcommand{\bfx}{{\boldsymbol{\sl{x}}}}
\def\int{\displaystyle\!int}
\def\lim{\displaystyle\!lim}
\def\sum{\displaystyle\!sum}
\def\sup{\displaystyle\!sup}
\def\inf{\displaystyle\!inf}
\def\cap{\displaystyle\!cap}
\def\max{\displaystyle\!max}
\def\min{\displaystyle\!min}
\def\frac{\displaystyle\!frac}
\let\oldsection\section
\renewcommand\section{\setcounter{equation}{0}\oldsection}
\DeclareMathOperator{\Li}{Li}
\def\N{\mathbb{N}}
\def\Z{\mathbb{Z}}
\def\ze{\zeta}
\theoremstyle{plain}
\newtheorem{thm}{Theorem}[section]
\newtheorem{lem}[thm]{Lemma}
\newtheorem{cor}[thm]{Corollary}
\theoremstyle{definition}
\newtheorem{re}[thm]{Remark}
\newtheorem{exa}[thm]{Example}
\begin{document}
%%%%%%%%%%%%%%%%%%%% title %%%%%%%%%%%%%%%%%%%%%%%%%%%%%%%%%%%%%%%%%%%%%%%%
\title{\bf Contour Integrations and Parity Results of Cyclotomic Euler Sums and Multiple Polylogarithm Function}
\author{
{Hongyuan Rui$^{a,}$\thanks{Email: rhy626514@163.com}\quad{and}\quad Ce Xu$^{b,}$\thanks{Email: cexu2020@ahnu.edu.cn}}\\[1mm]
a. \small School of Mathematics, Sichuan University,\\ \small Chengdu 610064, P.R. China\\
b. \small School of Mathematics and Statistics, Anhui Normal University,\\ \small Wuhu 241002, P.R. China
}

\date{}
\maketitle

\noindent{\bf Abstract.} In this paper, we define extended trigonometric functions via series and employ the method of contour integration to investigate the parity of certain cyclotomic Euler sums and multiple polylogarithm function. We can provide the statement of parity results for cyclotomic Euler sums of arbitrary order, explicit formulas for the parity of cyclotomic linear and quadratic Euler sums, as well as some formulas for the parity of cyclotomic cubic Euler sums and multiple polylogarithms. As a direct corollary, we derive known formulas concerning the parity of classical Euler sums and alternating Euler sums.

\medskip

\noindent{\bf Keywords}: Contour integration; Cyclotomic Euler sums;  Cyclotomic multiple zeta values; Residue theorem; Parity result; Multiple polylogarithm function.
\medskip

\noindent{\bf AMS Subject Classifications (2020):} 11M32, 11M99.

\section{Introduction}

In 1998, Flajolet and Salvy \cite{Flajolet-Salvy} investigated analytic formulas for a broad class of Dirichlet series known as ``\emph{Euler sums}" by considering contour integrals of the form
\begin{align*}
\oint_{(\infty)} r(s)\xi(s) ds:=\lim_{R\rightarrow \infty}\oint_{C_R} r(s)\xi(s) ds =0,
\end{align*}
where $C_R$ denote a circular contour with radius $R$. Here $\xi(s)$ is referred to as a kernel function, defined as
\begin{align*}
\xi(s)=\frac{\pi\cot(\pi s)\psi^{(p_1-1)}(-s)\psi^{(p_2-1)}(-s)\cdots \psi^{(p_k-1)}(-s)}{(p_1-1)!(p_2-1)!\cdots(p_k-1)!}
\end{align*} and $r(s)$ is a basis function, defined as $r(s)=1/s^q\ (\forall p_j\in\N,\ q\in \N\setminus \{0\})$. Here $\psi(s)$ denotes the the \emph{digamma function} defined by
\begin{align}\label{defn-classical-psi-funtion}
\psi(s)=-\gamma-\frac1{s}+\sum_{k=1}^\infty \left(\frac1{k}-\frac{1}{s+k}\right),
\end{align}
where $s\in\mathbb{C}\setminus \N_0^-$ and $\N_0^-:=\N^-\cup\{0\}=\{0,-1,-2,-3,\ldots\}$.

The \emph{classical Euler sums} they studied were defined as
\begin{align}\label{defn-classicalEulersums}
&{S_{{p_1,p_2,\ldots, p_k};q}} := \sum\limits_{n = 1}^\infty  {\frac{{H_n^{\left( {{p_1}} \right)}H_n^{\left( {{p_2}} \right)} \cdots H_n^{\left( {{p_k}} \right)}}}
{{{n^{q}}}}},
\end{align}
where $p_j\in \N$ and $q\geq 2$ with $1\leq p_1\leq p_2\leq \cdots \leq p_r$. When $k=1$, it is referred to as a \emph{linear Euler sum}, and when $k>1$, it is called a \emph{nonlinear Euler sum}. The quantity $p_1+\cdots+p_k+q$ is called the ``weight" of the sum, and the quantity $k$ is called the
``order". $H_n^{(p)}$ stands the {\emph{generalized harmonic number}} of order $p$ defined by
\[H_n^{(p)}:=\sum_{k=1}^n \frac{1}{k^p}\quad\text{and}\quad H_n\equiv H_n^{(1)}.\]
In particular, they proved the following theorem concerning the parity of classical Euler sums (\cite[Theorem 5.3]{Flajolet-Salvy} ): \emph{A Euler sum $S_{p_1,\ldots, p_k;q}$ with $k\geq 2$ reduces to a combination of sums of lower orders whenever the weight $p_1+\cdots+p_k+q$ and the order $k$ are of the same parity}. According to the series stuffle relations, the product of harmonic numbers $H_n^{p_1}\cdots H_n^{(p_k)}$  can be expressed as a linear combination of multiple harmonic sums. Consequently, every classical Euler sum can be represented as an $\Z$-coefficient linear combination of multiple zeta values. The explicit formulas were established by the second author of this paper and Wang in \cite{Xu-Wang2020}. For $k_1,\ldots,k_r$ are positive integers and $k_r\geq 2$, the \emph{multiple zeta values} (MZVs) are defined by (\cite{H1992,DZ1994})
\begin{align*}
\zeta(\bfk)\equiv \zeta(k_1,\ldots,k_r):=\sum_{0<n_1<\cdots<n_r} \frac{1}{n_1^{k_1}\cdots n_r^{k_r}},
\end{align*}
where $r$ and $k_1+\cdots+k_r$ are called the \emph{depth} and \emph{weight}, respectively. A systematic introduction to multiple zeta values and numerous research results on their properties up to 2016 can be found in Zhao's monograph \cite{Z2016}. Research on the parity of multiple zeta values has yielded abundant results. The study of this parity phenomenon originates from the work of Borwein and Girgensohn \cite{BG1996}. They postulated a conjecture which posits that if the weight $w$ and the depth $r$ of a multiple zeta value $\zeta(\bfk)$ are of opposite parity, then $\zeta(\bfk)$ can be expressed as a $\mathbb{Q}[\pi^2]$-linear combination of multiple zeta values with depth not exceeding $r-1$. This conjecture was subsequently proved and generalized by Bouillot\cite{Bouillot2014}, Ihara-Kaneko-Zagier \cite{IKZ2006}, Machide \cite{Machide2016} Panzer \cite{Panzer2017}, Tsumura \cite{Tsu-2004}, and among others. Regrettably, none of these parity results provide explicit formulas. Very recently, Hirose \cite{Hirose2025} derived an explicit formula for the parity of multiple zeta values by employing the theory of multitangent functions developed by Bouillot \cite{Bouillot2014}.

The second author of this paper and Wang employed similar contour integration techniques to establish parity results for Euler sums involving odd harmonic numbers (see \cite[Theorem 5.1]{Xu-Wang2022}). From these results, explicit formulas were derived for the parity of Hoffman's multiple $t$-values (\cite{H2019}) and Kaneko-Tsumura's multiple $T$-values (\cite{KanekoTs2019}) at depths $\leq 3$.
For $\bfk=(k_1,\ldots,k_r)\in\N^r$ with $k_r>1$, the \emph{multiple $t$-values} and \emph{multiple $T$-values} are defined by
\begin{align*}
t(\bfk):=\sum_{\substack{0<n_1<\cdots<n_r\\ n_j: \text{ odd}}}\frac{2^r}{n_1^{k_1}\cdots n_r^{k_r}}\quad\text{and}\quad T(\bfk):=\sum_{\substack{0<n_1<\cdots<n_r\\ n_j\equiv j\pmod{2}}} \frac{2^r}{n_1^{k_1}\cdots n_r^{k_r}}.
\end{align*}
In fact, Zhao \cite{Zhao2015} had begun studying some sum formulas for multiple $t$-values a few years prior to Hoffman's formal definition of multiple $t$-values. Recent studies on sum formulas for multiple $t$-values and multiple $T$-values can be found in Zhao's paper \cite{Z2024} and the references therein.

In this paper, we begin by defining extended trigonometric functions through a generalized digamma function. Subsequently, by considering contour integrals involving both the generalized digamma function and the extended trigonometric functions, we investigate the parity of \emph{generalized Euler sums} of the following form:
\begin{align}\label{defn-GeneralizedEulerSums}
S_{p_1,\ldots, p_k;q}(x_1,\ldots,x_k;x):=\sum_{n=1}^\infty \frac{\zeta_{n}(p_1;x_1)\zeta_{n}(p_2;x_2)\cdots \zeta_{n}(p_k;x_k)}{n^q}x^n,
\end{align}
where $p_1,\ldots,p_k,q\in \N$ and $|x_1\cdots x_kx|\leq 1$ with $1\leq p_1\leq p_2\leq \cdots \leq p_r$ and $(q,x)\neq (1,1)$. When $x_1,\ldots,x_k,x$ are all roots of unity, then we call them the \emph{cyclotomic Euler sums}. If $x_1,\ldots,x_r\in\{\pm 1\}$ and at least one $x_j=-1$, then they are called \emph{alternating Euler sums}. Similar to the classical Euler sum \eqref{defn-classicalEulersums}, the quantity $p_1+\cdots+p_k+q$ is called the ``weight" of the sum, and the quantity $k$ is called the
``order".  Here $\zeta_n(p;x)$ stands the finite sum of polylogarithm function defined by
\begin{align}
\zeta_n(p;x):=\sum_{k=1}^n \frac{x^k}{k^p}\quad (p\in \N,\ |x|\leq 1),
\end{align}
and the \emph{polylogarithm function} $\Li_p(x)$ is defined by
\begin{align}
\Li_{p}(x):=\lim_{n\rightarrow \infty}\zeta_n(p;x)= \sum_{n=1}^\infty \frac{x^n}{n^p}\quad (|x|\leq 1,\ (p,x)\neq (1,1),\ p\in \N).
\end{align}
For any $\bfk=(k_1,\dotsc,k_r)\in\N^r$ and $\bfx=(x_1,\ldots,x_r)\in \mathbb{C}^r$, the classical \emph{multiple polylogarithm function} with $r$-variables is defined by
\begin{align}\label{defn-mpolyf}
\Li_{\bfk}(\bfx)\equiv \Li_{k_1,\dotsc,k_r}(x_1,\dotsc,x_r):=\sum_{0<n_1<\cdots<n_r} \frac{x_1^{n_1}\dotsm x_r^{n_r}}{n_1^{k_1}\dotsm n_r^{k_r}}
\end{align}
which converges if $|x_j\cdots x_r|<1$ for all $j=1,\dotsc,r$. It can be analytically continued to a multi-valued meromorphic function on $\mathbb{C}^r$ (see \cite{Zhao2007d}). In particular, if $(k_1,\dotsc, k_r)\in\N^r$ and $x_1,\ldots,x_r$ are $N$th roots of unity, then \eqref{defn-mpolyf} become the \emph{cyclotomic (or colored) multiple zeta values of level $N$} which converges if $(k_r,x_r)\ne (1,1)$ (see \cite{YuanZh2014a} and \cite[Ch. 15]{Z2016}). The research results on cyclotomic multiple zeta values are also very extensive. For some recent research work in this area, please refer to literature \cite{Anzawa2026,BTT2018,Li2024,JLi2025,SZ2020,Tasaka2021} and references therein. The level two cyclotomic multiple zeta values are called \emph{alternating multiple zeta values}. In this case, namely,
when $(x_1,\dotsc,x_r)\in\{\pm 1\}^r$ and $(k_r,x_r)\ne (1,1)$, we set
$\zeta(\bfk;\bfx)= \Li_\bfk(\bfx)$. Further, we put a bar on top of
$k_j$ if $x_j=-1$. For example,
\begin{equation*}
\zeta(\bar 3,2,\bar 1,4)=\zeta(3,2,1,4;-1,1,-1,1).
\end{equation*}
Both the multiple $t$-values defined by Hoffman (\cite{H2019}) and the multiple $T$-values defined by Kaneko and Tsumura (\cite{KanekoTs2019}) can be regarded as a type of level two multiple zeta values, as they can both be expressed as $\Z$-linear combinations of alternating multiple zeta values.
It is evident from the \emph{stuffle relations} (see \cite{H2000}) that every generalized Euler sum can be expressed as an $\Z$-coefficient linear combination of multiple polylogarithm functions. For example: for generalized linear and quadratic Euler sums, we have
\begin{align}
&S_{p;q}(x;y)=\Li_{p,q}(x,y)+\Li_{p+q}(xy),\label{doubleCESDPL}\\
&S_{p_1,p_2;q}(x_1,x_2;x)=\Li_{p_1,p_2,q}(x_1,x_2,x)+\Li_{p_2,p_1,q}(x_2,x_1,x)+\Li_{p_1+p_2,q}(x_1x_2,x)\nonumber
\\&\qquad\qquad\qquad\qquad+\Li_{p_1,p_2+q}(x_1,x_2x)+\Li_{p_2,p_1+q}(x_2,x_1x)+\Li_{p_1+p_2+q}(x_1x_2x).\label{TripleCESTPL}
\end{align}
Clearly, when $x=1$ and all $x_j=1$, the generalized Euler sum \eqref{defn-GeneralizedEulerSums} reduces to the classical Euler sum \eqref{defn-classicalEulersums}.

The objective of this paper is to employ the contour integral method to establish parity results for both generalized and cyclotomic Euler sums. In particular, we can provide explicit formulas for the parity of cyclotomic linear and quadratic Euler sums. Furthermore, by leveraging the relationships between generalized Euler sums and multiple polylogarithms, as well as between cyclotomic Euler sums and cyclotomic multiple zeta values, we also derive certain parity results for multiple polylogarithms and cyclotomic multiple zeta values. One of the primary results of this paper is the proof of the following theorem regarding the parity of cyclotomic Euler sums (see Theorem \ref{thm-parityc-C-ES-one}):
\begin{thm}\label{thm-parityc-C-ESmain} Let $x,x_1,\ldots,x_r$ be roots of unity, and $p_1,\ldots,p_r,q\geq 1$ with $(p_j,x_j)\ (j=1,2,\ldots,r) $ and $ (q,x)\neq (1,1)$. We have
\begin{align*}
(-1)^r S_{p_1,p_2,\ldots,p_r;q}\Big(x_1,x_2,\ldots,x_r;x\Big)+(-1)^{p_1+\cdots+p_r+q}S_{p_1,p_2,\ldots,p_r;q}\Big(x_1^{-1},x_2^{-1},\ldots,x_r^{-1};x^{-1}\Big)
\end{align*}
reduces to a combination of cyclotomic Euler sums of lower orders.
\end{thm}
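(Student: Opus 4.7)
The plan is to adapt the Flajolet--Salvy contour integration framework of \cite{Flajolet-Salvy} to the cyclotomic setting, using the extended trigonometric functions and a generalized digamma function announced in the introduction. For each root of unity $x$, introduce an extended cotangent $\Phi(s;x)$ whose residue at every integer $n\in\Z$ equals $x^n$ (reducing to $\pi\cot(\pi s)$ when $x=1$), and introduce $\psi_p(s;x)=\sum_{k\geq 1}x^k/(k-s)^p$ whose Laurent expansion at a positive integer $n$ has principal part $1/(s-n)^p$ and regular part built from $\zeta_n(p;x)$ and $\Li_p(x)$. Form the kernel
\begin{align*}
\mathcal{K}(s)\;=\;\frac{1}{s^q}\,\Phi(s;x)\prod_{j=1}^{r}\psi_{p_j}(-s;x_j),
\end{align*}
and apply the vanishing identity $\oint_{(\infty)}\mathcal{K}(s)\,ds=0$ on the large contour $C_R$, which is guaranteed provided $p_1+\cdots+p_r+q\geq 2$ and $\Phi$, $\psi_p$ satisfy the expected uniform bounds in sectors avoiding the real axis.

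By the residue theorem, the total contribution from poles at positive integers, negative integers and $s=0$ vanishes. A direct calculation of $\Res_{s=n}\mathcal{K}(s)$ for $n\geq 1$ shows that the top-order term (extracting one factor of $\zeta_n(p_j;x_j)$ from each $\psi_{p_j}$) contributes, after summation over $n$, a constant multiple of $(-1)^r S_{p_1,\ldots,p_r;q}(x_1,\ldots,x_r;x)$, while all other terms have fewer than $r$ factors of $\zeta_n(p_j;x_j)$ and so are cyclotomic Euler sums of order strictly less than $r$. At $s=-n<0$ the functions $\psi_{p_j}(-s;x_j)=\psi_{p_j}(n;x_j)$ are regular; the key identity
\begin{align*}
\psi_{p}(n;x)\;=\;(-1)^{p}\zeta_{n}(p;x^{-1})+\text{(constants involving }\Li_{p}\text{)}
\end{align*}
converts these values, together with the $x^{-n}$ from $\Phi$ and the $(-1)^q$ from $1/s^q$, into $(-1)^{p_1+\cdots+p_r+q}S_{p_1,\ldots,p_r;q}(x_1^{-1},\ldots,x_r^{-1};x^{-1})$ plus a remainder of order $<r$. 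Finally, the residue at $s=0$ is an explicit combination of the Taylor coefficients of $\Phi$ and $\psi_{p_j}$ at the origin, each expressible via the stuffle relations \eqref{doubleCESDPL}--\eqref{TripleCESTPL} as cyclotomic Euler sums of order strictly less than $r$.

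The main technical obstacles I anticipate are threefold. First, one must pin down the correct extended cotangent $\Phi(s;x)$ for an arbitrary root of unity and verify that its defining series (only conditionally convergent for $|x|=1$) admits a symmetric regularization yielding a meromorphic function with no extraneous poles, together with the growth bounds required to kill $\oint_{C_R}\mathcal{K}$. Second, one has to establish the functional equation for $\psi_p(s;x)$ relating $s$ to $-s$ that is responsible for the sign $(-1)^{p_1+\cdots+p_r}$ on the conjugate side; this is where the extended trigonometric machinery of the paper is genuinely needed, in contrast to the classical case where reflection of $\psi^{(p-1)}$ suffices. Third, the bookkeeping at $s=0$ must be done carefully enough that every emerging term is manifestly a cyclotomic Euler sum of order $<r$, so that the final statement ``reduces to lower orders'' is visible. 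Once these three points are settled, assembling the three residue contributions and collecting signs produces precisely the combination in the theorem on one side and a cyclotomic Euler sum of order $<r$ on the other.
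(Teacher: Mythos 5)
Your strategy is the paper's strategy: the kernel $\mathcal{K}(s)=\Phi(s;x)\prod_j\psi_{p_j}(-s;x_j)/s^q$ is, up to the $k=0$ terms and normalizing factors, exactly the integrand $F_{p_1\cdots p_r,q}(s)$ used in the proof of Theorem \ref{thm-parityc-C-ES-one}, and the identification of the top residue terms at $s=\pm n$ with the two Euler sums and of everything else with lower-order sums is precisely how the paper argues. Two points in your write-up need repair, though neither is fatal. First, the claim that $\psi_{p_j}(-s;x_j)$ is \emph{regular} at $s=-n$ is false: the $k=n$ summand of $\sum_{k\ge1}x_j^k/(k+s)^{p_j}$ produces a pole of order $p_j$ there, so $s=-n$ is a pole of $\mathcal{K}$ of order $p_1+\cdots+p_r+1$, and the residue is a $(p_1+\cdots+p_r)$-fold derivative, not a product of values. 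Your ``key identity'' is really the statement that the \emph{constant Laurent coefficient} of each factor at $s=-n$ is $x_j^n\big((-1)^{p_j}\zeta_n(p_j;x_j^{-1})+\Li_{p_j}(x_j)\big)$ (cf.\ \eqref{Lexp-phi--n-diffp-1}); the term of the residue in which every factor contributes this coefficient and $\Phi$ contributes its simple pole gives the conjugate sum with sign $(-1)^{p_1+\cdots+p_r+q}$, while every other term in the Leibniz expansion forces at least one factor to contribute its principal part and hence drops at least one $\zeta_n$, giving order $<r$. So the conclusion survives, but the computation as you describe it would both misstate the residue and silently omit the very cross-terms that constitute the lower-order remainder. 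Second, a bookkeeping point: each factor's expansion at $s=\pm n$ carries a prefactor $x_j^{\mp n}$, so the outer arguments that actually emerge are $(xx_1\cdots x_r)^{\mp1}$ rather than $x^{\pm1}$; the paper proves the statement in that form (Theorem \ref{thm-parityc-C-ES-one}) and then obtains the version above by substituting $x\mapsto (xx_1\cdots x_r)^{-1}$. Finally, the ``functional equation relating $s$ to $-s$'' you anticipate as an obstacle is not needed: the sign $(-1)^{p_1+\cdots+p_r}$ arises directly from $1/(k-n)^{p}=(-1)^{p}/(n-k)^{p}$ in the Laurent expansion at the negative integers, and the paper's $\Phi(s;x)=\phi(s;x)-\phi(-s;x^{-1})-1/s$ already supplies the meromorphic extended cotangent with the required growth on the circles $|s|=n+1/2$.
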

The above-mentioned parity theorem concerning cyclotomic Euler sums corresponds to the parity theorem for multiple polylogarithms proved by Panzer \cite[Thm 1.3]{Panzer2017}, which states that: for all $r\in \N$ and $\bfk=(k_1,\ldots,k_r)\in \N^r$, the function
\begin{align*}
\Li_{\bfk}(z_1,z_2,\ldots,z_r)-(-1)^{k_1+\cdots+k_r+r}\Li_{\bfk}(1/z_1,1/z_2,\ldots,1/z_r)
\end{align*}
is of depth at most $r-1$. Here $(z_1,\ldots,z_r)\in \mathbb{C}^r \setminus \bigcup_{1\leq i\leq j\leq r}\{(z_1,\ldots,z_r): z_iz_{i+1}\cdots z_j\in[0,+\infty)\}$.

\section{Preliminaries}

Define the \emph{generalized digamma function} $\phi(s;x)$ by
\begin{align}
\phi(s;x):=\sum_{k=0}^\infty \frac{x^k}{k+s}\quad (s\notin\N_0^-:=\{0,-1,-2,-3,\ldots\}),
\end{align}
where $x$ is an arbitrary complex number with $|x|\leq 1$ and $x\neq 1$.

For the subsequent contour integration and residue calculations, we need to derive either the Laurent series expansion or Taylor series expansion of this function at integer points.

By direct calculations, we obtain that if $|s+n|<1\ (n\in \N_0:=\{0,1,2,3,\ldots\})$, then
\begin{align}\label{Lexp-phi--n}
\phi(s;x)&=\frac{x^n}{s+n}+\sum_{k=0,\atop k\neq n}^\infty \frac{x^k}{k-n+s+n}=\frac{x^n}{s+n}+\sum_{k=0,\atop k\neq n}^\infty \frac{x^k}{(k-n)\left(1+\frac{s+n}{k-n} \right)}\nonumber\\
&=\frac{x^n}{s+n}+\sum_{k=0,\atop k\neq n}^\infty \frac{x^k}{k-n}\sum_{m=0}^\infty \left(-\frac{s+n}{k-n}\right)^m=\frac{x^n}{s+n}+\sum_{m=0}^\infty (-1)^m (s+n)^m \sum_{k=0,\atop k\neq n}^\infty \frac{x^k}{(k-n)^{m+1}} \nonumber\\
&=\frac{x^n}{s+n}+\sum_{m=0}^\infty \left((-1)^m\Li_{m+1}(x)-\zeta_n\Big(m+1;x^{-1}\Big) \right)x^n(s+n)^m,
\end{align}
and if $|s-n|<1\ (n\in \N)$, then
\begin{align}\label{Lexp-phi-n}
\phi(s;x)&=\sum_{k=0}^\infty \frac{x^k}{k+n+s-n}=\sum_{k=0}^\infty  \frac{x^k}{k+n} \frac{1}{1+\frac{s-n}{k+n}} \nonumber\\
&=\sum_{k=0}^\infty  \frac{x^k}{k+n} \sum_{m=0}^\infty \left(-\frac{s-n}{k+n}\right)^m=\sum_{m=0}^\infty (-1)^m (s-n)^m \sum_{k=0}^\infty  \frac{x^k}{(k+n)^{m+1}}  \nonumber\\
&=\sum_{m=0}^\infty (-1)^m\left(\Li_{m+1}(x)-\zeta_{n-1}(m+1;x) \right)x^{-n}(s-n)^m.
\end{align}

Taking the $(p-1)$th-order derivative of \eqref{Lexp-phi--n} and \eqref{Lexp-phi-n} with respect to $s$ respectively, we obtain
\begin{align}\label{Lexp-phi--n-diffp-1}
\frac{\phi^{(p-1)}(s;x)}{(p-1)!} (-1)^{p-1}&=x^n\sum_{k=0}^\infty \binom{k+p-1}{p-1} \left((-1)^k \Li_{k+p}(x)+(-1)^p\zeta_n\Big(k+p;x^{-1}\Big)\right)(s+n)^k \nonumber\\&\quad+\frac{x^n}{(s+n)^p}\qquad (|s+n|<1,\ n\geq 0)
\end{align}
and
\begin{align}\label{Lexp-phi-n-diffp-1}
\frac{\phi^{(p-1)}(s;x)}{(p-1)!} (-1)^{p-1}&=x^{-n}\sum_{k=0}^\infty \binom{k+p-1}{p-1} (-1)^k  \left( \Li_{k+p}(x)-\zeta_{n-1}\Big(k+p;x\Big)\right)(s-n)^k \nonumber\\&\qquad\qquad\qquad\qquad (|s-n|<1,\ n\geq 1).
\end{align}

Now, we define the \emph{extended trigonometric function} $\Phi(s;x)$ by
\[\Phi(s;x):=\phi(s;x)-\phi\Big(-s;x^{-1}\Big)-\frac1{s},\]
where to ensure the convergence of the series above, $x$ can only be any root of unity.  Clearly, all integers are simple poles of this function. In particular, $\Phi(s;1)=\pi \cot(\pi s)$ if $x=1$ and $\Phi(s;-1)=\pi \csc(\pi s)$ if $x=-1$. Applying \eqref{Lexp-phi--n} and \eqref{Lexp-phi-n}, we deduce that for any $s\rightarrow n\ (n\in \Z)$,
\begin{align}\label{LEPhi-function}
\Phi(s;x)=x^{-n} \left(\frac1{s-n}+\sum_{m=0}^\infty \Big((-1)^m\Li_{m+1}(x)-\Li_{m+1}\Big(x^{-1}\Big)\Big)(s-n)^m \right).
\end{align}
Setting $x=\pm 1$ then we have (see \cite{Flajolet-Salvy})
\begin{align}
&\pi \cot \left( {\pi s} \right)\mathop  = \limits^{s \to n} \frac{1}{{s - n}} - 2\sum\limits_{k = 1}^\infty  {\zeta( 2k){{\left( {s - n} \right)}^{2k - 1}}},\label{expansion-one-sine}\\
&\frac{\pi }
{{\sin \left( {\pi s} \right)}}\mathop  = \limits^{s \to n} {\left( { - 1} \right)^n}\left( {\frac{1}
{{s - n}} + 2\sum\limits_{k = 1}^\infty  {\bar \zeta ( {2k} ){{\left( {s - n} \right)}^{2k - 1}}} } \right),\label{expansion-one-cosine}
\end{align}
where $\zeta(s)$ and ${\bar \zeta} (s)$ denote the \emph{Riemann zeta function} and \emph{alternating Riemann zeta function} defined by
\[\zeta(s):=\sum_{n=1}^\infty \frac{1}{n^s}\quad (\Re(s)>1)\quad\text{and}\quad\bar \zeta ( s ) := \sum\limits_{n = 1}^\infty  {\frac{{{{\left( { - 1} \right)}^{n - 1}}}}{{{n^s}}}} \quad(\Re(s)>0).\]

Flajolet and Salvy \cite{Flajolet-Salvy} defined a kernel function $\xi(s)$ with two requirements: 1). $\xi(s)$ is meromorphic in the whole complex plane. 2). $\xi(s)$ satisfies $\xi(s)=o(s)$ over an infinite collection of circles $\left| s \right| = {\rho _k}$ with ${\rho _k} \to \infty $. Applying these two conditions of kernel
function $\xi(s)$, Flajolet and Salvy discovered the following residue lemma.

\begin{lem}\emph{(cf.\ \cite{Flajolet-Salvy})}\label{lem-redisue-thm}
Let $\xi(s)$ be a kernel function and let $r(s)$ be a rational function which is $O(s^{-2})$ at infinity. Then
\begin{align}\label{residue-}
\sum\limits_{\alpha  \in O} {{\mathop{\rm Res}}{{\left( {r(s)\xi(s)},\alpha  \right)}}}  + \sum\limits_{\beta  \in S}  {{\mathop{\rm Res}}{{\left( {r(s)\xi(s)}, \beta  \right)}}}  = 0,
\end{align}
where $S$ is the set of poles of $r(s)$ and $O$ is the set of poles of $\xi(s)$ that are not poles $r(s)$. Here ${\mathop{\rm Re}\nolimits} s{\left( {r(s)},\alpha \right)} $ denotes the residue of $r(s)$ at $s= \alpha.$
\end{lem}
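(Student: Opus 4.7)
The plan is to apply Cauchy's residue theorem on a sequence of expanding circular contours and exploit the decay hypothesis $\xi(s) = o(s)$. Let $\{\rho_k\}$ be the sequence of radii supplied by the defining property of the kernel function. First, I would perturb each $\rho_k$ by an arbitrarily small amount, if necessary, so that the circle $C_{\rho_k}:=\{|s|=\rho_k\}$ avoids every pole of $r(s)\xi(s)$. This adjustment is always feasible since the pole set is countable and nowhere dense, and it does not disturb either the divergence $\rho_k\to\infty$ or the growth estimate $\xi(s)=o(s)$ along $C_{\rho_k}$.

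Next, by Cauchy's residue theorem applied on $C_{\rho_k}$,
\begin{align*}
\frac{1}{2\pi i}\oint_{C_{\rho_k}} r(s)\xi(s)\,ds = \sum_{\gamma\in(S\cup O)\cap\{|s|<\rho_k\}} \Res\bigl(r(s)\xi(s),\gamma\bigr),
\end{align*}
since by the very definition of $O$ the sets $S$ and $O$ are disjoint and together exhaust all poles of $r(s)\xi(s)$. The task then reduces to showing that this contour integral tends to $0$ as $k\to\infty$.

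The estimate is routine: the hypothesis $r(s)=O(s^{-2})$ gives $|r(s)|\le C\rho_k^{-2}$ for some constant $C$, and $\xi(s)=o(s)$ on $C_{\rho_k}$ gives $|\xi(s)|\le\eps_k\rho_k$ with $\eps_k\to 0$. Combining with the arc length $2\pi\rho_k$, I obtain
\begin{align*}
\left|\oint_{C_{\rho_k}} r(s)\xi(s)\,ds\right| \le 2\pi\rho_k\cdot C\rho_k^{-2}\cdot\eps_k\rho_k = 2\pi C\eps_k \longrightarrow 0.
\end{align*}
Since $S$ is finite (as $r(s)$ is rational), passing to the limit $k\to\infty$ yields the identity \eqref{residue-}, and the convergence of the possibly infinite sum over $O$ follows automatically by applying the same estimate to the annuli $\rho_k\le|s|\le\rho_{k+1}$.

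The main, albeit mild, obstacle is justifying the existence of the perturbed radii $\rho_k$ together with the absolute convergence of $\sum_{\alpha\in O}\Res(r(s)\xi(s),\alpha)$ when $O$ is infinite. Both points are consequences of the polar set being discrete combined with the $o(s)$ growth bound, and they can be dispatched with a short topological-analytic argument rather than any deeper input. Once these technicalities are handled, the lemma follows directly from the vanishing of the contour integral.
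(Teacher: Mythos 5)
Your argument is correct and is essentially the standard proof from the cited reference \cite{Flajolet-Salvy} (the paper itself states this lemma without proof): apply the residue theorem on the circles $|s|=\rho_k$ and use $|r(s)\xi(s)|=O(\rho_k^{-2})\cdot o(\rho_k)$ against the arc length $2\pi\rho_k$ to kill the contour integral. One small cleanup: rather than perturbing the radii (which would forfeit the $o(s)$ bound, guaranteed only on the given circles), simply discard the finitely many circles meeting the finite pole set of $r(s)$ --- note $\xi$ has no poles on those circles since it is bounded there --- and the rest of the argument goes through verbatim.
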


Clearly, on the circle with radius $n+1/2\ (s\in \N)$, the functions $\phi(s;x)$, $\Phi(s;x)$ and their derivatives are all $O(|s|^\varepsilon)\ (\forall \varepsilon>0)$. Consequently, any
polynomial form in $\Phi(s;x)$ and $\phi^{(j)}(s;x)$ is itself a kernel function with poles at a subset of the integers. In this paper, we investigate the parity of cyclotomic Euler sums and multiple polylogarithms primarily by considering contour integrals of the form for the following two types: for $p_1,\ldots,p_r,q\in \N$,
\begin{align*}
\oint\limits_{\left( \infty  \right)}F_{p_1p_2\cdots p_r,q}(s)ds:=\oint\limits_{\left( \infty  \right)} \frac{\Phi(s;x)\phi^{(p_1-1)}(s;x_1)\cdots\phi^{(p_r-1)}(s;x_r)}{(p_1-1)!\cdots (p_r-1)!s^q}(-1)^{p_1+\cdots+p_r-r} ds=0
\end{align*}
and
\begin{align*}
\oint\limits_{\left( \infty  \right)}G_{p_1p_2\cdots p_r,q}(s)ds:=\oint\limits_{\left( \infty  \right)} \frac{\phi^{(p_1-1)}(s;x_1)\cdots\phi^{(p_r-1)}(s;x_r)}{(p_1-1)!\cdots (p_r-1)!s^q}(-1)^{p_1+\cdots+p_r-r} ds=0.
\end{align*}

\section{Parity Results for Cyclotomic Linear Euler Sums}
In this section, we examine the parity of cyclotomic linear Euler sums and cyclotomic double zeta values, as well as provide some formulas for the double polylogarithms.
First, we consider the contour integration (Here $p,q\in \N$)
\begin{align*}
\oint\limits_{\left( \infty  \right)} \frac{\Phi(s;x)\phi^{(p-1)}(s;y)}{(p-1)!s^q} (-1)^{p-1}ds=0.
\end{align*}

\begin{thm}\label{thm-double-CES} Let $x,y$ be roots of unity, and $p,q\geq 1$ with $(p,y), (q,xy)\neq (1,1)$. We have
\begin{align}
&\Li_{p,q}\left(y,\frac1{xy}\right)-(-1)^{p+q}\Li_{p,q}\left(\frac1{y},xy\right)\nonumber\\
&=\Li_p(y)\Li_q\left(\frac1{xy}\right)+(-1)^q \Li_p(y)\Li_q(xy)-\Li_{p+q}\left(\frac1{x}\right)\nonumber\\
&\quad-(-1)^q \sum_{l=0}^p \binom{p+q-l-1}{q-1} \left((-1)^l\Li_l(x)+\Li_l\left(\frac1{x}\right)\right)\Li_{p+q-l}(xy)\nonumber\\
&\quad-(-1)^q\sum_{l=0}^q \binom{p+q-l-1}{p-1} \left((-1)^l \Li_l\left(\frac1{x}\right)+\Li_l(x)\right)\Li_{p+q-l}(y),
\end{align}
where $\Li_0(x)+\Li_0(x^{-1}):=-1$.
\end{thm}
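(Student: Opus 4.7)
The approach is to apply the Flajolet--Salvy residue lemma (Lemma~\ref{lem-redisue-thm}) to the kernel $\Phi(s;x)\phi^{(p-1)}(s;y)$ paired with the basis $r(s)=1/s^q$, exactly as indicated in the paragraph preceding the theorem statement. The integrand has three classes of poles: simple poles at $s=n$ for $n\ge 1$ arising only from $\Phi$; poles of order $p+1$ at $s=-n$ for $n\ge 1$, where the simple pole of $\Phi$ meets the order-$p$ pole of $\phi^{(p-1)}$; and a pole of order $p+q+1$ at the origin. The plan is to compute each class of residues using the Laurent and Taylor expansions \eqref{Lexp-phi--n-diffp-1}, \eqref{Lexp-phi-n-diffp-1}, and \eqref{LEPhi-function}, sum them to zero, and isolate the two double polylogarithms appearing on the left of the identity.

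At $s=n\ge 1$ the residue reads $x^{-n}y^{-n}(\Li_p(y)-\zeta_{n-1}(p;y))/n^q$; summing over $n$ produces $\Li_p(y)\Li_q(1/(xy))-\Li_{p,q}(y,1/(xy))$, which contributes the first double polylogarithm on the left. At $s=-n\le -1$ I would expand all three factors, using the binomial expansion $1/s^q=(-1)^q n^{-q}\sum_{j\ge 0}\binom{j+q-1}{q-1}(s+n)^j/n^j$, and extract the $(s+n)^{-1}$ coefficient of the triple product. Three groups of terms then appear: the three-way leading singular part yields $(-1)^q\binom{p+q-1}{q-1}\Li_{p+q}(xy)$; the constant term of $(-1)^{p-1}\phi^{(p-1)}/(p-1)!$ combined with the simple-pole residue of $\Phi$ and the leading term of $1/s^q$ produces, via the series identity $\sum_{n\ge 1}(xy)^n\zeta_n(p;y^{-1})/n^q=\Li_{p,q}(y^{-1},xy)+\Li_{p+q}(x)$, the contribution $(-1)^q\Li_p(y)\Li_q(xy)+(-1)^{p+q}\Li_{p,q}(y^{-1},xy)+(-1)^{p+q}\Li_{p+q}(x)$, in particular generating the second double polylogarithm on the left; and the cross-terms pairing Taylor coefficients of $\Phi$ with the order-$p$ pole of $\phi^{(p-1)}$ produce, after reindexing $l=a+1$, precisely the $1\le l\le p$ portion of the first sum on the right.

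The main technical obstacle is the residue at $s=0$, which is a coefficient extraction in a triple Cauchy product of Laurent series. Write $\Phi(s;x)=s^{-1}+\sum_{m\ge 0}a_m s^m$ with $a_m=(-1)^m\Li_{m+1}(x)-\Li_{m+1}(x^{-1})$, and $(-1)^{p-1}\phi^{(p-1)}(s;y)/(p-1)!=s^{-p}+\sum_{k\ge 0}b_k s^k$ with $b_k=\binom{k+p-1}{p-1}(-1)^k\Li_{k+p}(y)$, noting that $\zeta_0\equiv 0$ wipes out the remaining terms at $n=0$. Dividing by $s^q$ and picking out the $s^{-1}$ term yields exactly three contributions: $b_q=(-1)^q\binom{p+q-1}{p-1}\Li_{p+q}(y)$, which realises the $l=0$ piece of the second sum on the right under the convention $\Li_0(x)+\Li_0(x^{-1}):=-1$; $a_{p+q-1}=(-1)^{p+q-1}\Li_{p+q}(x)-\Li_{p+q}(1/x)$, whose $\Li_{p+q}(x)$ part cancels against the matching $(-1)^{p+q}\Li_{p+q}(x)$ from the $s=-n$ sum, leaving exactly the $-\Li_{p+q}(1/x)$ term of the right-hand side; and the diagonal sum $\sum_{m=0}^{q-1}a_m b_{q-1-m}$, which after the reindexing $l=m+1$ reproduces the $1\le l\le q$ portion of the second sum on the right. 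Equating the total residue to zero and transposing the two $\Li_{p,q}$ terms to the left then gives precisely the stated identity, completing the plan.
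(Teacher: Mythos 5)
Your proposal is correct and follows essentially the same route as the paper: the same contour integral $\oint \Phi(s;x)\phi^{(p-1)}(s;y)(-1)^{p-1}/((p-1)!\,s^q)\,ds$, the same three classes of residues at $s=n$, $s=-n$, and $s=0$, and the same term-by-term matching (including the absorption of the leading singular contribution into the $l=0$ terms via the convention $\Li_0(x)+\Li_0(x^{-1})=-1$ and the cancellation of the two $\Li_{p+q}(x)$ contributions). All the individual residue computations you describe agree with those in the paper's proof.
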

\begin{proof}
Letting
\[F_{p,q}(s):= \frac{\Phi(s;x)\phi^{(p-1)}(s;y)}{(p-1)!s^q} (-1)^{p-1}.\]
Clearly, the function $F_{p,q}(s)$ only singularities are poles at the integers. At a positive integer $n\in \N$, the pole $s=n$ is simple and by the expansions \eqref{Lexp-phi-n-diffp-1} and \eqref{LEPhi-function}, the residue is
\begin{align*}
\Res\left(F_{p,q}(s),n\right)=\frac{(xy)^{-n}}{n^q} \Big(\Li_p(y)-\zeta_{n-1}(p;y)\Big).
\end{align*}
For positive integer $n$, the pole $s=-n$ has order $p+1$. By \eqref{Lexp-phi--n-diffp-1} and \eqref{LEPhi-function}, the residue is
\begin{align*}
\Res\left(F_{p,q}(s),-n\right)&=\frac1{p!} \lim_{s\rightarrow -n} \frac{d^p}{ds^p}\left((s+n)^{p+1} F_{p,q}(s)\right)\\
&=(-1)^q \binom{p+q-1}{p} \frac{(xy)^n}{n^{p+q}}+(-1)^q \frac{\Li_p(y)+(-1)^p\zeta_n\Big(p;y^{-1}\Big)}{n^q}(xy)^n\\
&\quad+(-1)^q \sum_{m=0}^{p-1} \binom{p+q-m-2}{q-1}\left((-1)^m\Li_{m+1}(x)-\Li_{m+1}\Big(x^{-1}\Big) \right) \frac{(xy)^n}{n^{p+q-m-1}}.
\end{align*}
The pole $s=0$ has order $p+q+1$, the residue is
\begin{align*}
\Res\left(F_{p,q}(s),0\right)&=(-1)^{p+q-1}\Li_{p+q}(x)-\Li_{p+q}\Big(x^{-1}\Big)+(-1)^q\binom{p+q-1}{p-1}\Li_{p+q}(y)\\
&\quad+\sum_{m+k=q-1,\atop m,k\geq 0} (-1)^k\binom{k+p-1}{p-1}\Li_{k+p}(y)\left((-1)^m \Li_{m+1}(x)-\Li_{m+1}\Big(x^{-1}\Big)\right).
\end{align*}
By Lemma \ref{lem-redisue-thm}, we know that
\[\sum_{n=1}^\infty \left(\Res\left(F_{p,q}(s),n\right)+\Res\left(F_{p,q}(s),-n\right)\right)+\Res\left(F_{p,q}(s),0\right)=0.\]
Finally, combining these three contributions yields the statement of Theorem \ref{thm-double-CES}.
\end{proof}

From equation \eqref{doubleCESDPL}, the parity formula for cyclotomic linear Euler sums $S_{p;q}(x;y)$ can be obtained.

If we consider contour integrals involving only the function $\phi(s;x)$ and rational functions, we can still obtain some results on generalized Euler sums, though the expressions become more complicated. The following theorem provides one such conclusion.

\begin{thm}\label{thm-double-CES-Comb}
 For positive integers $p_1,p_2,q$ and $x_1,x_2$ are arbitrary complex numbers with $0<|x_1|, |x_2|\leq 1$ and $x_1,x_2 \neq 1$, we have
\begin{align}
&(-1)^{p_1}\sum_{k=0}^{p_2-1}\binom{k+p_1-1}{p_1-1}\binom{q+p_2-k-2}{q-1} S_{k+p_1;q+p_2-k-1}\Big(x_1^{-1};x_1x_2\Big)\nonumber\\
&+(-1)^{p_2}\sum_{k=0}^{p_1-1}\binom{k+p_2-1}{p_2-1}\binom{q+p_1-k-2}{q-1} S_{k+p_2;q+p_1-k-1}\Big(x_2^{-1};x_1x_2\Big)\nonumber\\
&=(-1)^{p_1}\binom{p_1+p_2+q-2}{p_2-1}\Li_{p_1+p_2+q-1}(x_2)+(-1)^{p_2}\binom{p_1+p_2+q-2}{p_1-1}\Li_{p_1+p_2+q-1}(x_1)\nonumber\\
&\quad+\sum_{k_1+k_2=q-1,\atop k_1,k_2\geq 0} \binom{k+p_1-1}{p_1-1}\binom{k+p_2-1}{p_2-1} \Li_{k_1+p_1}(x_1)\Li_{k_2+p_2}(x_2)\nonumber\\
&\quad-\sum_{k=0}^{p_2-1}\binom{k+p_1-1}{p_1-1}\binom{q+p_2-k-2}{q-1}(-1)^k \Li_{k+p_1}(x_1)\Li_{q+p_2-k-1}(x_1x_2)\nonumber\\
&\quad-\sum_{k=0}^{p_1-1}\binom{k+p_2-1}{p_2-1}\binom{q+p_1-k-2}{q-1}(-1)^k \Li_{k+p_2}(x_1)\Li_{q+p_1-k-1}(x_1x_2)\nonumber\\
&\quad-\binom{p_1+p_2+q-2}{q-1} \Li_{p_1+p_2+q-1}(x_1x_2).
\end{align}
\end{thm}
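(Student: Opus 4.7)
The plan is to apply the residue lemma (Lemma \ref{lem-redisue-thm}) to the kernel
\[G_{p_1 p_2,q}(s)=\frac{\phi^{(p_1-1)}(s;x_1)\phi^{(p_2-1)}(s;x_2)}{(p_1-1)!(p_2-1)!\,s^q}(-1)^{p_1+p_2-2}\]
already introduced at the end of Section 2. For $|x_i|\le 1$ with $x_i\neq 1$, the same type of estimate that the authors invoke for roots of unity shows that each $\phi^{(j)}(s;x_i)$ grows no faster than a power of $\log|s|$ on the half-integer circles $|s|=N+\tfrac12$; combined with the decay from $1/s^q$, the contour integral vanishes in the limit and $\oint_{(\infty)}G_{p_1p_2,q}(s)\,ds=0$. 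In contrast to Theorem \ref{thm-double-CES}, no $\Phi$ factor is present, so the only singularities are poles at the non-positive integers: $s=-n$ ($n\ge 1$) is a pole of order $p_1+p_2$, while $s=0$ is a pole of order $p_1+p_2+q$.

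The first step is to evaluate the residue at each $s=-n$, $n\ge 1$. I would multiply the two Laurent expansions furnished by \eqref{Lexp-phi--n-diffp-1}, one per factor $\phi^{(p_i-1)}/(p_i-1)!$ (each already carrying its sign $(-1)^{p_i-1}$), with the geometric expansion
\[\frac{1}{s^q}=\frac{(-1)^q}{n^q}\sum_{j\ge 0}\binom{q+j-1}{j}\frac{(s+n)^j}{n^j},\]
and extract the coefficient of $(s+n)^{-1}$. Because $1/s^q$ contributes only non-negative powers of $s+n$, exactly three pairings survive: (i) the principal part $x_1^n/(s+n)^{p_1}$ from the first factor paired with the regular part of the second and a Taylor term of $1/s^q$; (ii) the symmetric pairing; and (iii) both principal parts paired with a single higher Taylor term of $1/s^q$. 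Summing over $n\ge 1$, the $\zeta_n$-parts of \eqref{Lexp-phi--n-diffp-1} generate exactly the Euler sums $S_{k+p_i;\,q+p_{3-i}-k-1}(x_i^{-1};x_1 x_2)$ appearing on the left-hand side of the theorem, the $\Li_{k+p_i}(x_i)$-parts generate the products $\Li_{k+p_i}(x_i)\Li_{q+p_{3-i}-k-1}(x_1 x_2)$ appearing on the right-hand side, and pairing (iii) contributes the isolated term $(-1)^q\binom{p_1+p_2+q-2}{q-1}\Li_{p_1+p_2+q-1}(x_1 x_2)$.

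Next I would compute the residue at $s=0$. Here \eqref{Lexp-phi--n-diffp-1} is used at $n=0$, so $\zeta_0\equiv 0$ and each $(-1)^{p_i-1}\phi^{(p_i-1)}(s;x_i)/(p_i-1)!$ reduces to $s^{-p_i}$ plus a Taylor series whose coefficients are $(-1)^k\binom{k+p_i-1}{p_i-1}\Li_{k+p_i}(x_i)$. The residue at $0$ is then the coefficient of $s^{q-1}$ in the product of these two expansions; it splits into three sub-cases: (principal of the first)$\times$(Taylor of the second) yields $(-1)^{p_1+q-1}\binom{p_1+p_2+q-2}{p_2-1}\Li_{p_1+p_2+q-1}(x_2)$, the symmetric pairing yields its analogue with $x_1$, and the Taylor$\times$Taylor convolution yields $(-1)^{q-1}\sum_{k_1+k_2=q-1}\binom{k_1+p_1-1}{p_1-1}\binom{k_2+p_2-1}{p_2-1}\Li_{k_1+p_1}(x_1)\Li_{k_2+p_2}(x_2)$.

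Finally, $\sum_{n\ge 1}\Res(G_{p_1p_2,q},-n)+\Res(G_{p_1p_2,q},0)=0$ produces the theorem after dividing through by $(-1)^q$ and moving the pure-polylogarithm contributions to the right-hand side. I expect the main obstacle to be purely bookkeeping: the overall $(-1)^{p_1+p_2-2}$ inside $G_{p_1p_2,q}$, the $(-1)^{p_i-1}$ in \eqref{Lexp-phi--n-diffp-1}, the $(-1)^q$ arising from expanding $s^{-q}$ around $-n$, and the alternating $(-1)^k$ in the Laurent coefficients must all combine correctly to reproduce the signs $(-1)^{p_1}$, $(-1)^{p_2}$ and $(-1)^k$ in the statement. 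Once these signs are reconciled, and binomial identities such as $\binom{q+p_i-k-2}{p_i-k-1}=\binom{q+p_i-k-2}{q-1}$ are applied, the match to the theorem becomes direct.
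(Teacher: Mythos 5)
Your proposal follows exactly the paper's own proof: the same kernel $G_{p_1p_2,q}(s)$, the same identification of poles of order $p_1+p_2$ at the negative integers and order $p_1+p_2+q$ at the origin, the same extraction of the Euler sums from the $\zeta_n$-parts of \eqref{Lexp-phi--n-diffp-1} and of the polylogarithm products from the residue at $0$, concluded by Lemma \ref{lem-redisue-thm}. The decomposition into the three pairings at $s=-n$ and the three sub-cases at $s=0$, together with the binomial identity $\binom{q+p_i-k-2}{p_i-k-1}=\binom{q+p_i-k-2}{q-1}$, reproduces the paper's computation, so the approach is correct and essentially identical.
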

\begin{proof}
Letting
\[\oint\limits_{\left( \infty  \right)}G_{p_1p_2,q}(s)ds:= \oint\limits_{\left( \infty  \right)}\frac{\phi^{(p_1-1)}(s;x_1)\phi^{(p_2-1)}(s;x_2)}{(p_1-1)!(p_2-1)!s^q} (-1)^{p_1+p_2}ds=0.\]
Clearly, the function $G_{p_1p_2,q}(s)$ only singularities are poles at the non-positive integers. At a positive integer $n\in \N$, the pole $s=-n$ is a pole of order $p_1+p_2$. Applying \eqref{Lexp-phi--n-diffp-1}, we have
\begin{align*}
G_{p_1p_2,q}(s)&=\frac{(x_1x_2)^n}{(s+n)^{p_1+p_2}}\\&
\quad+(x_1x_2)^n\sum_{k=0}^{p_2-1}\binom{k+p_1-1}{p_1-1}\left((-1)^k\Li_{k+p_1}(x_1)+(-1)^{p_1}\zeta_n\Big(k+p_1;x_1^{-1}\Big) \right)(s+n)^{k-p_2}
\\&
\quad+(x_1x_2)^n\sum_{k=0}^{p_1-1}\binom{k+p_2-1}{p_2-1}\left((-1)^k\Li_{k+p_2}(x_2)+(-1)^{p_2}\zeta_n\Big(k+p_2;x_2^{-1}\Big) \right)(s+n)^{k-p_1}\\
&\quad+\cdots.
\end{align*}
By a direct residue computation, one obtains
\begin{align*}
\Res\left(G_{p_1p_2,q}(s),-n\right)&=\frac1{(p_1+p_2-1)!} \lim_{s\rightarrow -n} \frac{d^{p_1+p_2-1}}{ds^{p_1+p_2-1}}\left((s+n)^{p_1+p_2} G_{p_1p_2,q}(s)\right)\\
&=(-1)^q\sum_{k=0}^{p_2-1}\binom{k+p_1-1}{p_1-1}\binom{q+p_2-k-2}{q-1}\\&\qquad\qquad\times\left((-1)^k\Li_{k+p_1}(x_1)+(-1)^{p_1}\zeta_n\Big(k+p_1;x_1^{-1}\Big)\right)\frac{(x_1x_2)^n}{n^{q+p_2-k-1}}\\
&\quad+(-1)^q\sum_{k=0}^{p_1-1}\binom{k+p_2-1}{p_2-1}\binom{q+p_1-k-2}{q-1}\\&\qquad\qquad\times\left((-1)^k\Li_{k+p_2}(x_2)+(-1)^{p_2}\zeta_n\Big(k+p_2;x_2^{-1}\Big)\right)\frac{(x_1x_2)^n}{n^{q+p_1-k-1}}\\
&\quad+(-1)^q \binom{q+p_1+p_2-2}{q-1}\frac{(x_1x_2)^n}{n^{q+p_1+p_2-1}}.
\end{align*}
The pole $s=0$ has order $p_1+p_2+q$, the residue is
\begin{align*}
\Res\left(G_{p_1p_2,q}(s),0\right)&=\frac1{(p_1+p_2+q-1)!} \lim_{s\rightarrow 0} \frac{d^{p_1+p_2+q-1}}{ds^{p_1+p_2+q-1}}\left(s^{p_1+p_2} G_{p_1p_2,q}(s)\right)\\
&=(-1)^{p_1+q-1}\binom{p_1+p_2+q-2}{p_2-1}\Li_{p_1+p_2+q-1}(x_2)\\&\quad+(-1)^{p_2+q-1}\binom{p_1+p_2+q-2}{p_1-1}\Li_{p_1+p_2+q-1}(x_1)\\
&\quad+(-1)^{q-1} \sum_{k_1+k_2=q-1,\atop k_1,k_2\geq 0} \binom{k+p_1-1}{p_1-1}\binom{k+p_2-1}{p_2-1} \Li_{k_1+p_1}(x_1)\Li_{k_2+p_2}(x_2).
\end{align*}
Therefore, using Lemma \ref{lem-redisue-thm} and combining these two contributions yields the statement of Theorem \ref{thm-double-CES-Comb}.
\end{proof}

Setting $p_1=p_2=1$ in Theorem \ref{thm-double-CES-Comb} gives the following corollary.

\begin{cor} For positive integer $q$ and $x_1,x_2$ are arbitrary complex numbers with $0<|x_1|, |x_2|\leq 1$ and $x_1,x_2 \neq 1$, we have
\begin{align}
&S_{1;q}\Big(x_1^{-1};x_1x_2\Big)+S_{1;q}\Big(x_2^{-1};x_1x_2\Big)\nonumber\\
&=q\Li_{q+1}(x_1x_2)+\Li_{q+1}(x_1)+\Li_{q+1}(x_2)+\Big(\Li_1(x_1)+\Li_1(x_2)\Big)\Li_q(x_1x_2)\nonumber\\&\quad-\sum_{k_1+k_2=q-1,\atop k_1,k_2\geq 0} \Li_{k_1+1}(x_1)\Li_{k_2+1}(x_2).
\end{align}
\end{cor}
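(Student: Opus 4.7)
The plan is to obtain the corollary by specialising Theorem \ref{thm-double-CES-Comb} to $p_1=p_2=1$; no new contour integration is required. With these values every outer summation $\sum_{k=0}^{p_j-1}$ collapses to its $k=0$ term, every binomial of the type $\binom{k+p_j-1}{p_j-1}$ degenerates to $\binom{0}{0}=1$, and the signs $(-1)^{p_j}$ both become $-1$. The entire argument thus reduces to simplifying the statement of Theorem \ref{thm-double-CES-Comb} under these substitutions.

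I plan to carry out the simplification in two stages. First, on the left-hand side of Theorem \ref{thm-double-CES-Comb}, the collapse of the two sums and the elimination of the surviving binomials produces $-S_{1;q}(x_1^{-1};x_1x_2)-S_{1;q}(x_2^{-1};x_1x_2)$. Second, on the right-hand side, the two leading linear polylog terms contribute $-\Li_{q+1}(x_1)-\Li_{q+1}(x_2)$, the convolution sum reduces to $\sum_{k_1+k_2=q-1}\Li_{k_1+1}(x_1)\Li_{k_2+1}(x_2)$, the two collapsed one-parameter sums contribute $-(\Li_1(x_1)+\Li_1(x_2))\Li_q(x_1x_2)$, and the final polylog term yields $-\binom{q}{q-1}\Li_{q+1}(x_1x_2) = -q\Li_{q+1}(x_1x_2)$. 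Multiplying through by $-1$ will then deliver the stated identity.

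The single subtle point, and the only place where I expect to have to stop and think, is the coefficient of $\Li_{q+1}(x_1x_2)$: it is the one binomial in Theorem \ref{thm-double-CES-Comb} that does \emph{not} collapse to $1$, because $\binom{p_1+p_2+q-2}{q-1}=\binom{q}{q-1}=q$ when $p_1=p_2=1$. Overlooking this would give the wrong coefficient $1$ instead of $q$. Beyond that, the proof is pure bookkeeping, and no real obstacle arises.
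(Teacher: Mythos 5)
Your proposal is correct and is exactly the paper's route: the paper derives this corollary by the one-line specialisation $p_1=p_2=1$ in Theorem \ref{thm-double-CES-Comb}, and your bookkeeping (including the one non-trivial binomial $\binom{q}{q-1}=q$ and the overall sign flip) matches the stated identity.
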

In particular, if letting $x_1=x_2=x$ yields
\begin{align}\label{equ-section2-classicalEs}
S_{1;q}\Big(x^{-1};x^2\Big)&=\frac{q}{2}\Li_{q+1}(x^2)+\Li_{q+1}(x)+\Li_1(x)\Li_q(x^2)\nonumber\\&\quad-\frac1{2}\sum_{k_1+k_2=q-1,\atop k_1,k_2\geq 0} \Li_{k_1+1}(x)\Li_{k_2+1}(x).
\end{align}
Clearly, when $q>1$ in Theorem \ref{thm-double-CES-Comb}, the generalized Euler sum on the left-hand side remains convergent as $x_1,x_2$ approach $1$, thereby reducing to the classical Euler sum. Consequently, the divergent terms that would otherwise arise in the limit of the right-hand series will cancel each other out. For example, setting $x\rightarrow 1$ in \eqref{equ-section2-classicalEs} yields the well-known result (see \cite[Thm. 2.2]{Flajolet-Salvy})
\begin{align}
S_{1;q}=\sum_{n=1}^\infty \frac{H_n}{n^q}=\left(1+\frac{q}{2}\right)\ze(q+1)-\frac1{2}\sum_{k=1}^{q-2}\ze(k+1)\ze(q-k).
\end{align}

\section{Parity Results for Cyclotomic Quadratic Euler Sums}
In this section, we will employ the method of contour integration, via residue computation, to investigate explicit formulas for the parity of cyclotomic quadratic Euler sums and present some results for generalized quadratic Euler sums. This will further lead to certain parity results for cyclotomic multiple zeta values of depth three. First, we consider the parity results for cyclotomic quadratic Euler sums. To better present the results in the subsequent discussion, we adopt the following notation unless otherwise specified:
\begin{align}
\sum_{\sigma\in \left\{\Big(a_1^{(1)}a_2^{(1)}\cdots a_r^{(1)}\Big),\ldots, \Big(a_1^{(k)}a_2^{(k)}\cdots a_r^{(k)}\Big)\right\}} f\Big(x_{\sigma(1)},\ldots,x_{\sigma(r)}\Big):=\sum_{j=1}^k f\Big(x_{a_1^{(j)}},\ldots,x_{a_r^{(j)}}\Big),
\end{align}
where $a_1^{(j)},\ldots,a_r^{(j)}\in \N$ and $j=1,2,\ldots,k\ (k\in \N)$. The $\Big(a_1^{(j)}a_2^{(j)}\cdots a_r^{(j)}\Big)$ is a permutation of distinct positive integers, and the order of the terms $a_i^{(j)}$ in the sequence $\Big(a_1^{(j)}a_2^{(j)}\cdots a_r^{(j)}\Big)$  is significant (i.e., their positions are not interchangeable). For example, $\Big(a_1^{(j)}a_2^{(j)}\cdots a_r^{(j)}\Big)$ and $\Big(a_2^{(j)}a_3^{(j)}\cdots a_r^{(j)}a_1^{(j)}\Big)$  represent distinct objects.

\begin{thm}\label{thm-quadratic-C-ES-one} Let $x,x_1,x_2$ be roots of unity, and $p_1,p_2,q\geq 1$ with $(p_1,x_1), (p_2,x_2) $ and $ (q,xx_1x_2)\neq (1,1)$. We have
\begin{align}
&S_{p_1,p_2;q}\left(x_1,x_2;\left(xx_1x_2\right)^{-1}\right)+\left(-1\right)^{p_1+p_2+q}S_{p_1,p_2;q}\left(x_1^{-1},x_2^{-1};xx_1x_2\right)\nonumber\\
&=-\Li_{p_1}\left(x_1\right)\Li_{p_2}\left(x_2\right)\Li_{q}\left(\left(xx_1x_2\right)^{-1}\right)-\Li_{p_1+p_2+q}\left(x^{-1}\right)\nonumber\\
&\quad+\sum_{\sigma\in\left\{\left(12\right),\left(21\right)\right\}}S_{p_{\sigma\left(1\right)};p_{\sigma\left(2\right)}+q}\left(x_{\sigma\left(1\right)};x_{\sigma\left(2\right)}\left(xx_1x_2\right)^{-1}\right)-\left(-1\right)^q\binom{p_1+p_2+q-1}{p_1+p_2}\Li_{p_1+p_2+q}\left(xx_1x_2\right)\nonumber\\
&\quad+\sum_{\sigma\in\left\{\left(12\right),\left(21\right)\right\}}\Li_{p_{\sigma\left(1\right)}}\left(x_{\sigma\left(1\right)}\right)
\left(S_{p_{\sigma\left(2\right)};q}\left(x_{\sigma\left(2\right)};\left(xx_1x_2\right)^{-1}\right)-\Li_{p_{\sigma\left(2\right)}+q}\left(x_{\sigma\left(2\right)}\left(xx_1x_2\right)^{-1}\right)\right)\nonumber\\
&\quad-\sum_{\sigma\in\left\{\left(12\right),\left(21\right)\right\}}\sum_{k=0}^{p_{\sigma\left(2\right)}}\binom{k+p_{\sigma\left(1\right)}-1}
{p_{\sigma\left(1\right)}-1}\binom{q+p_{\sigma\left(2\right)}-k-1}{q-1}\nonumber\\
&\qquad\times\left(\left(-1\right)^{k+q}\Li_{k+p_{\sigma\left(1\right)}}\left(x_{\sigma\left(1\right)}\right)\Li_{p_{\sigma\left(2\right)}+q-k}\left(xx_1x_2\right)
+\left(-1\right)^{p_{\sigma\left(1\right)}+q}S_{k+p_{\sigma\left(1\right)};p_{\sigma\left(2\right)}+q-k}\left(x_{\sigma\left(1\right)}^{-1};xx_1x_2\right)\right)\nonumber\\
&\quad-\sum_{\sigma\in\left\{\left(21\right),\left(12\right)\right\}}\sum_{0\le k_1+k_2\le p_{\sigma\left(2\right)}-1}\left(\left(-1\right)^{k_1}\Li_{k_1+1}\left(x\right)-\Li_{k_1+1}\left(x^{-1}\right)\right)\binom{k_2+p_{\sigma\left(1\right)}-1}{p_{\sigma\left(1\right)}-1}\nonumber\\
&\qquad\times\binom{q+p_{\sigma\left(2\right)}-k_1-k_2-2}{q-1}\begin{Bmatrix}
	\left(-1\right)^{k_2+q}\Li_{k_2+p_{\sigma\left(1\right)}}\left(x_{\sigma\left(1\right)}\right)\Li_{p_{\sigma\left(2\right)}+q-k_1-k_2-1}\left(xx_1x_2\right)\\
	+\left(-1\right)^{p_{\sigma\left(1\right)}+q}S_{k_2+p_{\sigma\left(1\right)};p_{\sigma\left(2\right)}+q-k_1-k_2-1}\left(x_{\sigma\left(1\right)}^{-1};xx_1x_2\right)
\end{Bmatrix}\nonumber\\
&\quad-\left(-1\right)^q\Li_{p_1}\left(x_1\right)\Li_{p_2}\left(x_2\right)\Li_{q}\left(xx_1x_2\right)\nonumber\\
&\quad-\sum_{\sigma\in\left\{\left(21\right),\left(12\right)\right\}}\left(-1\right)^{p_{\sigma\left(1\right)}+q}\Li_{p_{\sigma\left(2\right)}}\left(x_{\sigma\left(2\right)}\right)
S_{p_{\sigma\left(1\right)};q}\left(x_{\sigma\left(1\right)}^{-1};xx_1x_2\right)\nonumber\\
&\quad-\left(-1\right)^q\sum_{k=0}^{p_1+p_2-1}\binom{q+p_1+p_2-k-2}{q-1}\left(\left(-1\right)^k\Li_{k+1}\left(x\right)-\Li_{k+1}\left(x^{-1}\right)\right)\Li_{q+p_1+p_2-k-1}\left(xx_1x_2\right)\nonumber\\
&\quad-\left(-1\right)^q\sum_{k_1+k_2=q,\atop k_1,k_2\ge0}\binom{k_1+p_1-1}{p_1-1}
\binom{k_2+p_2-1}{p_2-1}\Li_{k_1+p_1}\left(x_1\right)\Li_{k_2+p_2}\left(x_2\right)\nonumber\\
&\quad-\left(-1\right)^{p_2+q}\binom{p_1+p_2+q-1}{p_1-1}\Li_{p_1+p_2+q}\left(x_1\right)-\left(-1\right)^{p_1+q}\binom{p_1+p_2+q-1}{p_2-1}\Li_{p_1+p_2+q}\left(x_2\right)\nonumber\\
&\quad-\sum_{k_1+k_2+k_3=q-1,\atop k_1,k_2,k_3\ge0}\left(\left(-1\right)^{k_1}\Li_{k_1+1}\left(x\right)-\Li_{k_1+1}\left(x^{-1}\right)\right)\left(\binom{k_2+p_1-1}{p_1-1}\left(-1\right)^{k_2}\Li_{k_2+p_1}\left(x_1\right)\right)\nonumber\\
&\qquad\times\left(\binom{k_3+p_2-1}{p_2-1}\left(-1\right)^{k_3}\Li_{k_3+p_2}\left(x_2\right)\right)+\left(-1\right)^{p_1+p_2+q}\Li_{p_1+p_2+q}\left(x\right)+\Li_{p_1+p_2+q}\left(x^{-1}\right)\nonumber\\
&\quad-\sum_{k_1+k_2=p_1+q-1,\atop k_1,k_2\ge0}\left(\left(-1\right)^{k_1}\Li_{k_1+1}\left(x\right)-\Li_{k_1+1}\left(x^{-1}\right)\right)\left(\binom{k_2+p_2-1}{p_2-1}\left(-1\right)^{k_2}\Li_{k_2+p_2}\left(x_2\right)\right)\nonumber\\
&\quad-\sum_{k_1+k_2=p_2+q-1,\atop k_1,k_2\ge0}\left(\left(-1\right)^{k_1}\Li_{k_1+1}\left(x\right)-\Li_{k_1+1}\left(x^{-1}\right)\right)\left(\binom{k_2+p_1-1}{p_1-1}\left(-1\right)^{k_2}\Li_{k_2+p_1}\left(x_1\right)\right).
\end{align}
\end{thm}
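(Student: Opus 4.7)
The plan is to apply the Flajolet--Salvy residue lemma (Lemma 2.1) to the kernel
\[
F_{p_1p_2,q}(s) = \frac{\Phi(s;x)\,\phi^{(p_1-1)}(s;x_1)\,\phi^{(p_2-1)}(s;x_2)}{(p_1-1)!\,(p_2-1)!\,s^q}(-1)^{p_1+p_2}
\]
already introduced in Section 2; since $x,x_1,x_2$ are roots of unity, $F_{p_1p_2,q}$ is meromorphic with poles supported on $\Z$ and satisfies the $o(s)$ growth condition on the circles $|s|=n+1/2$, so that $\sum_{\alpha\in\Z}\Res(F_{p_1p_2,q}(s),\alpha)=0$. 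The proof then proceeds in the same spirit as Theorem \ref{thm-double-CES}, by computing the three families of residues explicitly and summing them.

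At a positive integer $s=n$, only $\Phi(s;x)$ contributes a (simple) pole, while the two digamma factors are analytic; extracting the leading coefficients from the Taylor expansions of $\phi^{(p_j-1)}(s;x_j)$ at $s=n$ and from $\Phi(s;x)$ yields
\[
\Res(F_{p_1p_2,q}(s),n) = \frac{(xx_1x_2)^{-n}}{n^q}\bigl(\Li_{p_1}(x_1)-\zeta_{n-1}(p_1;x_1)\bigr)\bigl(\Li_{p_2}(x_2)-\zeta_{n-1}(p_2;x_2)\bigr).
\]
Summation over $n\ge 1$, combined with the index shift $\zeta_n(p;x_j)=\zeta_{n-1}(p;x_j)+x_j^n/n^{p}$, reproduces the first summand $S_{p_1,p_2;q}(x_1,x_2;(xx_1x_2)^{-1})$ on the left-hand side together with the cross products $\Li_{p_{\sigma(1)}}(x_{\sigma(1)})S_{p_{\sigma(2)};q}(x_{\sigma(2)};(xx_1x_2)^{-1})$ and the fully factored term $-\Li_{p_1}(x_1)\Li_{p_2}(x_2)\Li_q((xx_1x_2)^{-1})$ visible at the top of the conclusion.

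At a negative integer $s=-n$ the pole has order $p_1+p_2+1$. Multiplying the Laurent expansions of $\phi^{(p_j-1)}(s;x_j)$ (each with a pole of order $p_j$) and of $\Phi(s;x)$ (simple pole) centred at $s=-n$, expanding $s^{-q}$ as a Taylor series at $-n$ with coefficients $(-1)^q\binom{k+q-1}{q-1}n^{-q-k}$, and extracting the coefficient of $(s+n)^{p_1+p_2}$ by a Leibniz expansion yields the binomial-convolution structure $\binom{k+p_{\sigma(1)}-1}{p_{\sigma(1)}-1}\binom{q+p_{\sigma(2)}-k-1}{q-1}$ symmetric in $\sigma\in\{(12),(21)\}$. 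Summing in $n$, the $\zeta_n(\cdot;x_j^{-1})$ terms become the quadratic Euler sum $(-1)^{p_1+p_2+q}S_{p_1,p_2;q}(x_1^{-1},x_2^{-1};xx_1x_2)$ (from the leading coefficient) and the linear Euler sums $S_{k+p_{\sigma(1)};\,p_{\sigma(2)}+q-k}(x_{\sigma(1)}^{-1};xx_1x_2)$, while the pure polylogarithmic parts become the $\Li_{k+p_{\sigma(1)}}(x_{\sigma(1)})\Li_{p_{\sigma(2)}+q-k}(xx_1x_2)$ products on the right-hand side.

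Finally, at $s=0$ the pole is of order $p_1+p_2+q+1$; a further Leibniz expansion of $s^{p_1+p_2+q+1}F_{p_1p_2,q}(s)$ up to degree $p_1+p_2+q$ gives the convolution-type polynomial in $\Li_j(x^{\pm 1})$, $\Li_m(x_j)$, and $\Li_{p_1+p_2+q}(\cdot)$ recorded in the last several lines of the statement, with the $(\Li_{k+1}(x)-(-1)^k\Li_{k+1}(x^{-1}))$ blocks arising directly from the expansion of $\Phi(s;x)$ at $s=0$ (cf.\ \eqref{LEPhi-function}). Combining the three families via Lemma \ref{lem-redisue-thm} yields the stated identity. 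The main obstacle is purely bookkeeping: one must expand a product of three Laurent series at $s=-n$ to order $p_1+p_2$ in $(s+n)$, carry out the triple Leibniz expansion at $s=0$, and reorganise the output so that all binomial-convolution blocks are indexed uniformly by $\sigma\in\{(12),(21)\}$ and the $\Phi$-derived $\Li$ differences are grouped exactly as displayed; the conceptual framework is identical to that of Theorem \ref{thm-double-CES}, but the additional digamma factor roughly doubles the number of intermediate terms that must be tracked.
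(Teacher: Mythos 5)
Your proposal follows essentially the same route as the paper: the same kernel $F_{p_1p_2,q}(s)$, the same three families of poles (simple at $s=n$, order $p_1+p_2+1$ at $s=-n$, order $p_1+p_2+q+1$ at $s=0$), the same residue at positive integers, and the same invocation of Lemma \ref{lem-redisue-thm} to combine them, so it is correct in approach and matches the paper's proof. The only discrepancy is cosmetic: you write the $\Phi$-derived blocks as $\Li_{k+1}(x)-(-1)^k\Li_{k+1}(x^{-1})$ rather than the paper's $(-1)^k\Li_{k+1}(x)-\Li_{k+1}(x^{-1})$, which differ only by an overall factor of $(-1)^k$ that would be absorbed in the bookkeeping you already acknowledge.
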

\begin{proof}
Letting
\[\oint\limits_{\left( \infty  \right)}F_{p_1p_2,q}(s)ds:= \oint\limits_{\left( \infty  \right)}\frac{\Phi(s;x)\phi^{(p_1-1)}(s;x_1)\phi^{(p_2-1)}(s;x_2)}{(p_1-1)!(p_2-1)!s^q} (-1)^{p_1+p_2}ds=0.\]
Clearly, the function $F_{p_1p_2,q}(s)$ only singularities are poles at the integers. At a positive integer $n\in \N$, the pole $s=n$ is simple and by the expansions \eqref{Lexp-phi-n-diffp-1} and \eqref{LEPhi-function}, the residue is
\begin{align*}
	\Res\left(F_{p_1,p_2,q}(s),n\right)=&\lim\limits_{s\to n}\left(s-n\right)x^{-n} \left(\frac1{s-n}+\sum_{m=0}^\infty \Big((-1)^m\Li_{m+1}(x)-\Li_{m+1}\Big(x^{-1}\Big)\Big)(s-n)^m \right)\\
	&\times x_1^{-n}\sum_{k=0}^\infty \binom{k+p_1-1}{p_1-1} (-1)^k  \left( \Li_{k+p_1}(x_1)-\zeta_{n-1}\Big(k+p_1;x_1\Big)\right)(s-n)^k\\
	&\times x_2^{-n}\sum_{k=0}^\infty \binom{k+p_2-1}{p_2-1} (-1)^k  \left( \Li_{k+p_2}(x_2)-\zeta_{n-1}\Big(k+p_2;x_2\Big)\right)(s-n)^ks^{-q}\\
	=&\lim\limits_{s\to n}\left(xx_1x_2\right)^{-n}\left( \Li_{p_1}(x_1)-\zeta_{n-1}\Big(p_1;x_1\Big)\right)\left( \Li_{p_2}(x_2)-\zeta_{n-1}\Big(p_2;x_2\Big)\right)s^{-q}\\
	=&\left(xx_1x_2\right)^{-n}\left( \Li_{p_1}(x_1)-\zeta_{n-1}\Big(p_1;x_1\Big)\right)\left( \Li_{p_2}(x_2)-\zeta_{n-1}\Big(p_2;x_2\Big)\right)n^{-q}.
\end{align*}
For positive integer $n$, the pole $s=-n$ has order $p_1+p_2+1$. By \eqref{Lexp-phi--n-diffp-1} and \eqref{LEPhi-function}, the residue is
\begin{align*}
	&\Res\left(F_{p_1p_2,q}(s),-n\right)\\
	&=\frac1{\left(p_1+p_2\right)!} \lim_{s\rightarrow -n} \frac{d^{p_1+p_2}}{ds^{p_1+p_2}}\left((s+n)^{p_1+p_2+1} F_{p_1p_2,q}(s)\right)\\
	&=\frac1{\left(p_1+p_2\right)!} \lim_{s\rightarrow -n}\frac{d^{p_1+p_2}}{ds^{p_1+p_2}}\left(s+n\right)^{p_1+p_2+1}\\
	&\qquad\times\left\{\left(x_1^n\sum_{k=0}^\infty \binom{k+p_1-1}{p_1-1} \left((-1)^k \Li_{k+p_1}(x_1)+(-1)^{p_1}\zeta_n\Big(k+p_1;x_1^{-1}\Big)\right)(s+n)^k +\frac{x_1^n}{(s+n)^{p_1}}\right)\right.\\
	&\qquad\times\left(x_2^n\sum_{k=0}^\infty \binom{k+p_2-1}{p_2-1} \left((-1)^k \Li_{k+p}(x_2)+(-1)^p\zeta_n\Big(k+p_2;x_2^{-1}\Big)\right)(s+n)^k +\frac{x_2^n}{(s+n)^{p_2}}\right)\\
	&\qquad\times\left.\left(x^{n} \left(\frac1{s+n}+\sum_{m=0}^\infty \Big((-1)^m\Li_{m+1}(x)-\Li_{m+1}\Big(x^{-1}\Big)\Big)(s+n)^m \right)\right)s^{-q}\right\}\\
	&=\left(-1\right)^q\binom{p_1+p_2+q-1}{p_1+p_2}\left(xx_1x_2\right)^n/n^{q+p_1+p_2}+\left(xx_1x_2\right)^n\sum_{k=0}^{p_2}\binom{k+p_1-1}{p_1-1}\binom{q+p_2-k-1}{q-1}\\
	&\qquad\times\left((-1)^k \Li_{k+p_1}(x_1)+(-1)^{p_1}\zeta_n\Big(k+p_1;x_1^{-1}\Big)\right)\left(-1\right)^q/n^{p_2+q-k}\\
	&\quad+\left(xx_1x_2\right)^n\sum_{k=0}^{p_1}\binom{k+p_2-1}{p_2-1}\binom{q+p_1-k-1}{q-1}\\
	&\qquad\times\left((-1)^k \Li_{k+p_2}(x_2)+(-1)^{p_2}\zeta_n\Big(k+p_2;x_2^{-1}\Big)\right)\left(-1\right)^q/n^{p_1+q-k}\\
	&\quad+\sum_{0\le k_1+k_2\le p_1-1}\left(-1\right)^q\binom{q+p_1-k_1-k_2-2}{q-1}\Big((-1)^{k_1}\Li_{k_1+1}(x)-\Li_{k_1+1}\Big(x^{-1}\Big)\Big)\left(xx_1x_2\right)^n\\
	&\qquad\times\binom{k_2+p_2-1}{p_2-1} \left((-1)^{k_2} \Li_{k_2+p_2}(x_2)+(-1)^{p_2}\zeta_n\Big(k_2+p_2;x_2^{-1}\Big)\right)/n^{p_1+q-k_1-k_2-1}\\
	&\quad+\sum_{0\le k_1+k_2\le p_2-1}\left(-1\right)^q\binom{q+p_2-k_1-k_2-2}{q-1}\Big((-1)^{k_1}\Li_{k_1+1}(x)-\Li_{k_1+1}\Big(x^{-1}\Big)\Big)\left(xx_1x_2\right)^n\\
	&\qquad\times\binom{k_2+p_1-1}{p_1-1} \left((-1)^{k_2} \Li_{k_2+p_1}(x_1)+(-1)^{p_1}\zeta_n\Big(k_2+p_1;x_1^{-1}\Big)\right)/n^{p_2+q-k_1-k_2-1}\\
	&\quad+\left(-1\right)^q\left(xx_1x_2\right)^n\left(\Li_{p_1}\left(x_1\right)+\left(-1\right)^{p_1}\zeta_n\left(p_1;x_1^{-1}\right)\right)\left(\Li_{p_2}\left(x_2\right)+\left(-1\right)^{p_2}\zeta_n\left(p_2;x_2^{-1}\right)\right)/n^q\\
	&\quad+\left(-1\right)^q\left(xx_1x_2\right)^n\sum_{k=0}^{p_1+p_2-1}\binom{q+p_1+p_2-k-2}{q-1}\left(\left(-1\right)^k\Li_{k+1}\left(x\right)-\Li_{k+1}\left(x^{-1}\right)\right)/n^{q+p_1+p_2-k-1}.
\end{align*}
The pole $s=0$ has order $p_1+p_2+q+1$, the residue is
\begin{align*}
	&\Res\left(F_{p_1p_2,q}(s),0\right)\\
	&=\sum_{k_1+k_2=q,\atop k_1,k_2\geq 0}\binom{k_1+p_1-1}{p_1-1}\binom{k_2+p_2-1}{p_2-1}\left(-1\right)^q\Li_{k_1+p_1}\left(x_1\right)\Li_{k_2+p_2}\left(x_2\right)\\
	&\quad+\binom{p_1+p_2+q-1}{p_1-1}\left(-1\right)^{p_2+q}\Li_{p_1+p_2+q}\left(x_1\right)+\binom{p_1+p_2+q-1}{p_2-1}\left(-1\right)^{p_1+q}\Li_{p_1+p_2+q}\left(x_2\right)\\
	&\quad+\sum_{k_1+k_2+k_3=q-1,\atop k_1,k_2,k_3\geq 0}\left(\left(-1\right)^{k_1}\Li_{k_1+1}\left(x\right)-\Li_{k_1+1}\left(x^{-1}\right)\right)\left(\left(-1\right)^{k_2}\binom{k_2+p_1-1}{p_1-1}\Li_{k_2+p_1}\left(x_1\right)\right)\\
	&\qquad\times\left(\left(-1\right)^{k_3}\binom{k_3+p_2-1}{p_2-1}\Li_{k_3+p_2}\left(x_2\right)\right)+\left(-1\right)^{p_1+p_2+q-1}\Li_{p_1+p_2+q}\left(x\right)-\Li_{p_1+p_2+q}\left(x^{-1}\right)\\
	&\quad+\sum_{k_1+k_2=p_1+q-1,\atop k_1,k_2\geq 0}\left(\left(-1\right)^{k_1}\Li_{k_1+1}\left(x\right)-\Li_{k_1+1}\left(x^{-1}\right)\right)\left(\left(-1\right)^{k_2}\binom{k_2+p_2-1}{p_2-1}\Li_{k_2+p_2}\left(x_2\right)\right)\\
	&\quad+\sum_{k_1+k_2=p_2+q-1,\atop k_1,k_2\geq 0}\left(\left(-1\right)^{k_1}\Li_{k_1+1}\left(x\right)-\Li_{k_1+1}\left(x^{-1}\right)\right)\left(\left(-1\right)^{k_2}\binom{k_2+p_1-1}{p_1-1}\Li_{k_2+p_1}\left(x_1\right)\right).
\end{align*}
By Lemma \ref{lem-redisue-thm}, we know that
\[\sum_{n=1}^\infty \left(\Res\left(F_{p_1p_2,q}(s),n\right)+\Res\left(F_{p_1p_2,q}(s),-n\right)\right)+\Res\left(F_{p_1p_2,q}(s),0\right)=0.\]
Finally, combining these three contributions yields the statement of Theorem \ref{thm-quadratic-C-ES-one}.
\end{proof}

\begin{re}
Theorem \ref{thm-quadratic-C-ES-one} corresponds to \cite[Thm. 4.2]{Flajolet-Salvy}.
\end{re}

Setting $p_1=p_2=1$ in Theorem \ref{thm-quadratic-C-ES-one} yields the following corollary.
\begin{cor} Let $x,x_1,x_2$ be roots of unity, and $q\geq 1$ with $(q,xx_1x_2)\neq (1,1)$. We have
\begin{align*}
	&S_{1,1;q}\left(x_1,x_2;\left(xx_1x_2\right)^{-1}\right)+\left(-1\right)^qS_{1,1;q}\left(x^{-1}_1,x^{-1}_2;xx_1x_2\right)\\
	&=-\Li_{q+2}\left(x^{-1}\right)-\Li_{1}\left(x_1\right)\Li_{1}\left(x_2\right)\Li_{q}\left(\left(xx_1x_2\right)^{-1}\right)-\left(-1\right)^q\binom{q+1}{2}\Li_{q+2}\left(xx_1x_2\right)\\
	&\quad+S_{1;q+1}\left(x_1;\left(xx_1\right)^{-1}\right)+S_{1;q+1}\left(x_2;\left(xx_2\right)^{-1}\right)+\Li_{1}\left(x_1\right)S_{1;q}\left(x_2;\left(xx_1x_2\right)^{-1}\right)\\
	&\quad-\Li_{1}\left(x_1\right)\Li_{q+1}\left(xx_1\right)+\Li_{1}\left(x_2\right)S_{1;q}\left(x_1;\left(xx_1x_2\right)^{-1}\right)-\Li_{1}\left(x_2\right)\Li_{q+1}\left(xx_2\right)\\
	&\quad-\left(-1\right)^qq\Li_{1}\left(x_1\right)\Li_{q+1}\left(xx_1x_2\right)+\left(-1\right)^qqS_{1;q+1}\left(x_1^{-1};xx_1x_2\right)\\
	&\quad-\left(-1\right)^qq\Li_{1}\left(x_2\right)\Li_{q+1}\left(xx_1x_2\right)+\left(-1\right)^qqS_{1;q+1}\left(x_2^{-1};xx_1x_2\right)+\left(-1\right)^q\Li_2\left(x_1\right)\Li_q\left(xx_1x_2\right)\\
	&\quad+\left(-1\right)^qS_{2;q}\left(x^{-1}_1;xx_1x_2\right)+\left(-1\right)^q\Li_2\left(x_2\right)\Li_q\left(xx_1x_2\right)+\left(-1\right)^qS_{2;q}\left(x^{-1}_2;xx_1x_2\right)\\
	&\quad-\left(\Li_1\left(x\right)-\Li_1\left(x^{-1}\right)\right)\left(\left(-1\right)^q\Li_{1}\left(x_2\right)\Li_q\left(xx_1x_2\right)-\left(-1\right)^qS_{1;q}\left(x^{-1}_2;xx_1x_2\right)\right)\\
	&\quad-\left(\Li_1\left(x\right)-\Li_1\left(x^{-1}\right)\right)\left(\left(-1\right)^q\Li_{1}\left(x_1\right)\Li_q\left(xx_1x_2\right)-\left(-1\right)^qS_{1;q}\left(x^{-1}_1;xx_1x_2\right)\right)\\
	&\quad-\left(-1\right)^q\Li_1\left(x_1\right)\Li_1\left(x_2\right)\Li_q\left(xx_1x_2\right)+\left(-1\right)^q\Li_{1}\left(x_1\right)S_{1;q}\left(x_2^{-1};xx_1x_2\right)\\
	&\quad+\left(-1\right)^q\Li_{1}\left(x_2\right)S_{1;q}\left(x_1^{-1};xx_1x_2\right)-\left(-1\right)^qq\left(\Li_{1}\left(x\right)-\Li_1\left(x^{-1}\right)\right)\Li_{q+1}\left(xx_1x_2\right)\\
	&\quad+\left(-1\right)^q\left(\Li_{2}\left(x\right)+\Li_2\left(x^{-1}\right)\right)\Li_{q}\left(xx_1x_2\right)+\left(-1\right)^q\Li_{q+2}\left(x_1\right)+\left(-1\right)^q\Li_{q+2}\left(x_2\right)\\
	&\quad-\left(-1\right)^q\sum_{k_1+k_2=q,\atop k_1,k_2\ge0}\Li_{k_1+1}\left(x_1\right)\Li_{k_2+1}\left(x_2\right)+\left(-1\right)^q\Li_{q+2}\left(x\right)+\Li_{q+2}\left(x^{-1}\right)\\
	&\quad-\sum_{k_1+k_2+k_3=q-1,\atop k_1,k_2,k_3\ge0}\left(\left(-1\right)^{k_1}\Li_{k_1+1}\left(x\right)-\Li_{k_1+1}\left(x^{-1}\right)\right)\left(-1\right)^{k_2+k_3}\Li_{k_2+1}\left(x_1\right)\Li_{k_3+1}\left(x_2\right)\\
	&\quad-\sum_{k_1+k_2=q,\atop k_1,k_2\ge0}\left(\left(-1\right)^{k_1}\Li_{k_1+1}\left(x\right)-\Li_{k_1+1}\left(x^{-1}\right)\right)\left(-1\right)^{k_2}\Li_{k_2+1}\left(x_2\right)\\
	&\quad-\sum_{k_1+k_2=q,\atop k_1,k_2\ge0}\left(\left(-1\right)^{k_1}\Li_{k_1+1}\left(x\right)-\Li_{k_1+1}\left(x^{-1}\right)\right)\left(-1\right)^{k_2}\Li_{k_2+1}\left(x_1\right).
\end{align*}
\end{cor}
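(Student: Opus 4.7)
The plan is to specialize Theorem \ref{thm-quadratic-C-ES-one} to the case $p_1 = p_2 = 1$; no further contour integration or residue computation is required, since the corollary is the substitution instance of the theorem just proved. The work reduces to a careful bookkeeping exercise on the right-hand side of that theorem.

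First, I would simplify the binomial coefficients. Every $\binom{k+p_j-1}{p_j-1}$ becomes $\binom{k}{0} = 1$, and the exceptional ones simplify to $\binom{p_1+p_2+q-1}{p_1+p_2} = \binom{q+1}{2}$ and $\binom{p_1+p_2+q-1}{p_j-1} = 1$ for $j = 1,2$. Similarly, $\binom{q+p_{\sigma(2)}-k-1}{q-1}$ becomes $\binom{q-k}{q-1}$, which is nonzero only for $k = 0, 1$, and $\binom{q+p_{\sigma(2)}-k_1-k_2-2}{q-1}$ collapses after forcing $k_1 = k_2 = 0$ (the only choice consistent with $0 \le k_1+k_2 \le p_{\sigma(2)} - 1 = 0$).

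Next, I would expand the sums over $\sigma \in \{(12),(21)\}$ explicitly as two-term sums, replacing $x_{\sigma(1)}, x_{\sigma(2)}$ by the appropriate labels. The small finite sums simplify as follows: $\sum_{k=0}^{p_1+p_2-1}$ contains only the terms $k=0,1$; the products $\Li_{p_j}(x_j)$ become $\Li_1(x_j)$; the signs $(-1)^{p_j}$, $(-1)^{p_{\sigma(1)}+q}$, etc.\ all reduce to $\pm(-1)^q$ in a uniform way; and the sums $\sum_{k_1+k_2 = p_j+q-1}$ become $\sum_{k_1+k_2=q}$. The generalized Euler sum notation collects terms like $\zeta_n(1;x_j)$ implicitly through the definition in \eqref{defn-GeneralizedEulerSums}, so the symbols $S_{1;q+1}$, $S_{2;q}$, $S_{1;q}$ appear as the natural residue contributions.

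The main obstacle is purely organizational: the right-hand side of Theorem \ref{thm-quadratic-C-ES-one} has many terms, and one must track signs and index substitutions carefully so that, after all simplifications, each term of the corollary matches exactly one surviving term of the specialized theorem. Since no cancellation or further identity is invoked, the argument is mechanical; once every term is displayed and its sign checked, the corollary is read off as stated.
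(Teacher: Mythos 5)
Your proposal is correct and matches the paper exactly: the paper derives this corollary purely by setting $p_1=p_2=1$ in Theorem \ref{thm-quadratic-C-ES-one} and carrying out the same bookkeeping (binomials collapsing to $1$, $q$, or $\binom{q+1}{2}$; the $\sigma$-sums expanded; the constrained index sums reduced as you describe). No further argument is needed or given in the paper.
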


Furthermore, we provide the following examples for illustration.

\begin{exa}Setting $\left(p_1,p_2,q\right)=\left(1,2,2\right)$ in Theorem \ref{thm-quadratic-C-ES-one}, we have
	\begin{align*}
		&S_{1,2;2}\left(x_1,x_2;\left(xx_1x_2\right)^{-1}\right)-S_{1,2;2}\left(x^{-1}_1,x^{-1}_2;xx_1x_2\right)\\
		&=-\Li_{1}\left(x_1\right)\Li_{2}\left(x_2\right)\Li_{2}\left(\left(xx_1x_2\right)^{-1}\right)-\Li_5\left(x^{-1}\right)-4\Li_5\left(xx_1x_2\right)+S_{1;4}\left(x_1;\left(xx_1\right)^{-1}\right)\\
		&\quad+S_{2;3}\left(x_2;\left(xx_2\right)^{-1}\right)+\Li_1\left(x_1\right)S_{2;2}\left(x_2;\left(xx_1x_2\right)^{-1}\right)-\Li_1\left(x_1\right)\Li_{4}\left(\left(xx_1\right)^{-1}\right)\\
		&\quad+\Li_2\left(x_2\right)S_{1;2}\left(x_1;\left(xx_1x_2\right)^{-1}\right)-\Li_2\left(x_2\right)\Li_{3}\left(\left(xx_2\right)^{-1}\right)-3\Li_1\left(x_1\right)\Li_4\left(xx_1x_2\right)\\
		&\quad+3S_{1;4}\left(x_1^{-1};xx_1x_2\right)+2\Li_2\left(x_1\right)\Li_3\left(xx_1x_2\right)+2S_{2;3}\left(x_1^{-1};xx_1x_2\right)-\Li_3\left(x_1\right)\Li_2\left(xx_1x_2\right)\\
		&\quad+S_{3;2}\left(x_1^{-1};xx_1x_2\right)-2\Li_2\left(x_2\right)\Li_3\left(xx_1x_2\right)-2S_{2;3}\left(x_2^{-1};xx_1x_2\right)+2\Li_3\left(x_2\right)\Li_2\left(xx_1x_2\right)\\
		&\quad-2S_{3;2}\left(x_2^{-1};xx_1x_2\right)-\left(\Li_1\left(x\right)-\Li_1\left(x^{-1}\right)\right)\Li_2\left(x_2\right)\Li_2\left(xx_1x_2\right)\\
		&\quad-\left(\Li_1\left(x\right)-\Li_1\left(x^{-1}\right)\right)S_{2;2}\left({x_2}^{-1};xx_1x_2\right)-2\left(\Li_1\left(x\right)-\Li_1\left(x^{-1}\right)\right)
\Li_{1}\left(x_1\right)\Li_3\left(xx_1x_2\right)\\
		&\quad+\left(\Li_2\left(x\right)+\Li_2\left(x^{-1}\right)\right)\Li_{1}\left(x_1\right)\Li_2\left(xx_1x_2\right)+\left(\Li_1\left(x\right)-\Li_1\left(x^{-1}\right)\right)\Li_{2}\left(x_1\right)\Li_2\left(xx_1x_2\right)\\
		&\quad+2\left(\Li_1\left(x\right)-\Li_1\left(x^{-1}\right)\right)S_{1;3}\left(x^{-1}_1;xx_1x_2\right)-\left(\Li_2\left(x\right)+\Li_2\left(x^{-1}\right)\right)S_{1;2}
\left(x^{-1}_1;xx_1x_2\right)\\
		&\quad+\left(\Li_1\left(x\right)-\Li_1\left(x^{-1}\right)\right)S_{2;2}\left(x^{-1}_1;xx_1x_2\right)-\Li_1\left(x_1\right)\Li_2\left(x_2\right)\Li_2\left(xx_1x_2\right)\\
		&\quad-\Li_1\left(x_1\right)S_{2;2}\left(x_2^{-1};xx_1x_2\right)+\Li_2\left(x_2\right)S_{1;2}\left(x_1^{-1};xx_1x_2\right)-3\left(\Li_1\left(x\right)-\Li_1\left(x^{-1}\right)\right)
\Li_4\left(xx_1x_2\right)\\
		&\quad+2\left(\Li_2\left(x\right)+\Li_2\left(x^{-1}\right)\right)\Li_3\left(xx_1x_2\right)-\left(\Li_3\left(x\right)-\Li_3\left(x^{-1}\right)\right)\Li_2\left(xx_1x_2\right)\\
		&\quad-3\Li_1\left(x_1\right)\Li_4\left(x_2\right)-\Li_3\left(x_1\right)\Li_2\left(x_2\right)-2\Li_2\left(x_1\right)\Li_3\left(x_2\right)-\Li_5\left(x_1\right)+4\Li_5\left(x_2\right)\\
		&\quad+\left(\Li_2\left(x\right)+\Li_2\left(x^{-1}\right)\right)\Li_1\left(x_1\right)\Li_2\left(x_2\right)+\left(\Li_1\left(x\right)-\Li_1\left(x^{-1}\right)\right)
\Li_2\left(x_1\right)\Li_2\left(x_2\right)\\
		&\quad+2\left(\Li_1\left(x\right)-\Li_1\left(x^{-1}\right)\right)\Li_1\left(x_1\right)\Li_3\left(x_2\right)-\Li_5\left(x\right)+\Li_5\left(x^{-1}\right)\\
		&\quad-2\left(\Li_2\left(x\right)+\Li_2\left(x^{-1}\right)\right)\Li_3\left(x_2\right)-\left(\Li_3\left(x\right)-\Li_3\left(x^{-1}\right)\right)\Li_2\left(x_2\right)\\
		&\quad-3\left(\Li_1\left(x\right)-\Li_1\left(x^{-1}\right)\right)\Li_4\left(x_2\right)+\left(\Li_4\left(x\right)+\Li_4\left(x^{-1}\right)\right)\Li_1\left(x_1\right)\\
		&\quad+\left(\Li_3\left(x\right)-\Li_3\left(x^{-1}\right)\right)\Li_2\left(x_1\right)+\left(\Li_2\left(x\right)+\Li_2\left(x^{-1}\right)\right)\Li_3\left(x_1\right)\\
		&\quad+\left(\Li_1\left(x\right)-\Li_1\left(x^{-1}\right)\right)\Li_4\left(x_1\right).
	\end{align*}
Setting $\left(p_1,p_2,q\right)=\left(1, 1, 2\right)$ in Theorem \ref{thm-quadratic-C-ES-one}, we have
\begin{align*}
	&S_{1,1;2}\left(x_1,x_2;\left(xx_1x_2\right)^{-1}\right)+S_{1,1;2}\left(x^{-1}_1,x^{-1}_2;xx_1x_2\right)\\
	&=-\Li_{4}\left(x^{-1}\right)-\Li_{1}\left(x_1\right)\Li_{1}\left(x_2\right)\Li_{2}\left(\left(xx_1x_2\right)^{-1}\right)-3\Li_{4}\left(xx_1x_2\right)\\
	&\quad+S_{1;3}\left(x_1;\left(xx_1\right)^{-1}\right)+S_{1;3}\left(x_2;\left(xx_2\right)^{-1}\right)+\Li_{1}\left(x_1\right)S_{1;2}\left(x_2;\left(xx_1x_2\right)^{-1}\right)\\
	&\quad-\Li_{1}\left(x_1\right)\Li_{3}\left(\left(xx_1\right)^{-1}\right)+\Li_{1}\left(x_2\right)S_{1;2}\left(x_1;\left(xx_1x_2\right)^{-1}\right)-\Li_{1}\left(x_2\right)\Li_{3}\left(\left(xx_2\right)^{-1}\right)\\
	&\quad-2\Li_{1}\left(x_1\right)\Li_{3}\left(xx_1x_2\right)+2S_{1;3}\left(x_1^{-1};xx_1x_2\right)\\
	&\quad-2\Li_{1}\left(x_2\right)\Li_{3}\left(xx_1x_2\right)+2S_{1;3}\left(x_2^{-1};xx_1x_2\right)+\Li_2\left(x_1\right)\Li_2\left(xx_1x_2\right)\\
	&\quad+S_{2;2}\left(x^{-1}_1;xx_1x_2\right)+\Li_2\left(x_2\right)\Li_2\left(xx_1x_2\right)+S_{2;2}\left(x^{-1}_2;xx_1x_2\right)\\
	&\quad-\left(\Li_1\left(x\right)-\Li_1\left(x^{-1}\right)\right)\left(\Li_{1}\left(x_2\right)\Li_2\left(xx_1x_2\right)-S_{1;2}\left(x^{-1}_2;xx_1x_2\right)\right)\\
	&\quad-\left(\Li_1\left(x\right)-\Li_1\left(x^{-1}\right)\right)\left(\Li_{1}\left(x_1\right)\Li_2\left(xx_1x_2\right)-S_{1;2}\left(x^{-1}_1;xx_1x_2\right)\right)\\
	&\quad-\Li_1\left(x_1\right)\Li_1\left(x_2\right)\Li_2\left(xx_1x_2\right)+\Li_{1}\left(x_1\right)S_{1;2}\left(x_2^{-1};xx_1x_2\right)\\
	&\quad+\Li_{1}\left(x_2\right)S_{1;2}\left(x_1^{-1};xx_1x_2\right)-2\left(\Li_{1}\left(x\right)-\Li_1\left(x^{-1}\right)\right)\Li_{3}\left(xx_1x_2\right)\\
	&\quad+\left(\Li_{2}\left(x\right)+\Li_2\left(x^{-1}\right)\right)\Li_{2}\left(xx_1x_2\right)+\Li_{4}\left(x_1\right)+\Li_{4}\left(x_2\right)\\
	&\quad-\Li_1\left(x_1\right)\Li_3\left(x_2\right)-\Li_3\left(x_1\right)\Li_1\left(x_2\right)-\Li_2\left(x_1\right)\Li_2\left(x_2\right)+\Li_{4}\left(x\right)+\Li_{4}\left(x^{-1}\right)\\
	&\quad+\left(\Li_{2}\left(x\right)+\Li_{2}\left(x^{-1}\right)\right)\Li_{1}\left(x_1\right)\Li_{1}\left(x_2\right)+\left(\Li_{1}\left(x\right)-\Li_{1}\left(x^{-1}\right)\right)\Li_{2}\left(x_1\right)\Li_{1}\left(x_2\right)\\
	&\quad+\left(\Li_{1}\left(x\right)-\Li_{1}\left(x^{-1}\right)\right)\Li_{1}\left(x_1\right)\Li_{2}\left(x_2\right)-\left(\Li_{1}\left(x\right)-\Li_{1}\left(x^{-1}\right)\right)\Li_{3}\left(x_2\right)\\
	&\quad-\left(\Li_{2}\left(x\right)+\Li_{2}\left(x^{-1}\right)\right)\Li_{2}\left(x_2\right)-\left(\Li_{3}\left(x\right)-\Li_{3}\left(x^{-1}\right)\right)\Li_{1}\left(x_2\right)\\
	&\quad-\left(\Li_{1}\left(x\right)-\Li_{1}\left(x^{-1}\right)\right)\Li_{3}\left(x_1\right)-\left(\Li_{2}\left(x\right)+\Li_{2}\left(x^{-1}\right)\right)\Li_{2}\left(x_1\right)-\left(\Li_{3}\left(x\right)-\Li_{3}\left(x^{-1}\right)\right)\Li_{1}\left(x_1\right).
	\end{align*}
\end{exa}

Next, we employ the method of contour integration to investigate some results on generalized quadratic Euler sums.

\begin{thm}\label{thm-quadratic-G-ES-one} For positive integer $q$ and $x_1,x_2,x_3$ are arbitrary complex numbers with $0<|x_1|, |x_2|,|x_3|\leq 1$ and $x_1,x_2,x_3 \neq 1$, we have
\begin{align}
&\sum_{1\leq i<j\leq 3} S_{1,1;q}\Big(x_i^{-1},x_j^{-1};x_1x_2x_3\Big)\nonumber\\
&=S_{1,1;q}\Big(x_1^{-1},x_2^{-1};x_1x_2x_3\Big)+S_{1,1;q}\Big(x_1^{-1},x_3^{-1};x_1x_2x_3\Big)+S_{1,1;q}\Big(x_2^{-1},x_3^{-1};x_1x_2x_3\Big)\nonumber\\
&=q\left(S_{1;q+1}\Big(x_1^{-1};x_1x_2x_3\Big)+S_{1;q+1}\Big(x_2^{-1};x_1x_2x_3\Big)+S_{1;q+1}\Big(x_3^{-1};x_1x_2x_3\Big)\right)\nonumber\\&\quad
+S_{2;q}\Big(x_1^{-1};x_1x_2x_3\Big)+S_{2;q}\Big(x_2^{-1};x_1x_2x_3\Big)+S_{2;q}\Big(x_3^{-1};x_1x_2x_3\Big)
\nonumber\\&\quad+\left(\Li_1(x_1)+\Li_1(x_2)\right)S_{1;q}\Big(x^{-1}_3;x_1x_2x_3\Big)+\left(\Li_1(x_1)+\Li_1(x_3)\right)S_{1;q}\Big(x^{-1}_2;x_1x_2x_3\Big)
\nonumber\\&\quad+\left(\Li_1(x_2)+\Li_1(x_3)\right)S_{1;q}\Big(x^{-1}_1;x_1x_2x_3\Big)+\left(\Li_{q+2}(x_1)+\Li_{q+2}(x_2)+\Li_{q+2}(x_3)\right)\nonumber\\&\quad -\frac{q(q+1)}{2} \Li_{q+2}(x_1x_2x_3)-q\left(\Li_1(x_1)+\Li_1(x_2)+\Li_1(x_3)\right)\Li_{q+1}(x_1x_2x_3)
\nonumber\\&\quad-\left(\Li_1(x_1)\Li_1(x_2)+\Li_1(x_1)\Li_1(x_3)+\Li_1(x_2)\Li_1(x_3)\right)\Li_q(x_1x_2x_3)\nonumber\\&\quad +\left(\Li_2(x_1)+\Li_2(x_2)+\Li_2(x_3)\right)\Li_{q}(x_1x_2x_3)\nonumber\\&\quad
-\sum_{k_1+k_2=q,\atop k_1,k_2\geq 0} \Li_{k_1+1}(x_1)\Li_{k_2+1}(x_2)-\sum_{k_1+k_3=q,\atop k_1,k_3\geq 0} \Li_{k_1+1}(x_1)\Li_{k_3+1}(x_3)\nonumber\\
&\quad-\sum_{k_2+k_3=q,\atop k_2,k_3\geq 0} \Li_{k_2+1}(x_2)\Li_{k_3+1}(x_3)+\sum_{k_1+k_2+k_3=q-1,\atop k_1,k_2,k_3\geq 0} \Li_{k_1+1}(x_1)\Li_{k_2+1}(x_2)\Li_{k_3+1}(x_3).
\end{align}
\end{thm}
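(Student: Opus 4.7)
The plan is to apply Lemma \ref{lem-redisue-thm} to the symmetric triple-$\phi$ kernel
$$G_{111,q}(s):=\frac{\phi(s;x_1)\phi(s;x_2)\phi(s;x_3)}{s^q},$$
which is the natural three-variable analogue of the kernel $G_{p_1p_2,q}(s)$ used in the proof of Theorem \ref{thm-double-CES-Comb}. Since each $\phi(s;x_i)$ is holomorphic at positive integers and simple-pole singular at every non-positive integer, $G_{111,q}$ is a kernel in the sense of Section 2 whose only singularities are poles of order $3$ at $s=-n$ for $n\in\N$ and a pole of order $q+3$ at $s=0$.

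For the residue at $s=-n$, expansion \eqref{Lexp-phi--n-diffp-1} with $p=1$ gives, writing $u=s+n$,
$$\phi(s;x_i)=\frac{x_i^n}{u}+x_i^n A_i(u),\qquad A_i(u)=\sum_{k\geq 0}\bigl((-1)^k\Li_{k+1}(x_i)-\zeta_n(k+1;x_i^{-1})\bigr)u^k,$$
so that $\phi_1\phi_2\phi_3/(x_1x_2x_3)^n=u^{-3}+u^{-2}\sum_i A_i+u^{-1}\sum_{i<j}A_iA_j+A_1A_2A_3$. Pairing this with the geometric expansion $s^{-q}=(-1)^q n^{-q}\sum_{m\geq 0}\binom{m+q-1}{q-1}(u/n)^m$ and extracting the coefficient of $u^{-1}$ produces the four-layer residue
$$\Res_{-n}=\frac{(-1)^q(x_1x_2x_3)^n}{n^q}\!\left[\sum_{i<j}A_i(0)A_j(0)+\sum_i A_i'(0)\right]+\frac{(-1)^q q(x_1x_2x_3)^n}{n^{q+1}}\sum_i A_i(0)+\frac{(-1)^q\binom{q+1}{2}(x_1x_2x_3)^n}{n^{q+2}}.$$
For the residue at $s=0$, the same expansion with $n=0$ degenerates (since $\zeta_0(\cdot;\cdot)=0$) to $\phi(s;x_i)=1/s+B_i(s)$ with $B_i(s)=\sum_{m\geq 0}(-1)^m\Li_{m+1}(x_i)s^m$, hence
$$G_{111,q}(s)=\frac{1}{s^{q+3}}+\frac{\sum_i B_i(s)}{s^{q+2}}+\frac{\sum_{i<j}B_i(s)B_j(s)}{s^{q+1}}+\frac{B_1(s)B_2(s)B_3(s)}{s^q},$$
and $\Res_0$ is read off directly as a sum of $\Li_{q+2}(x_i)$'s, two-fold products $\Li_{k_1+1}(x_i)\Li_{k_2+1}(x_j)$ with $k_1+k_2=q$, and triple products $\Li_{k_1+1}(x_1)\Li_{k_2+1}(x_2)\Li_{k_3+1}(x_3)$ with $k_1+k_2+k_3=q-1$.

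The assembly step combines $\sum_{n\geq 1}\Res_{-n}+\Res_0=0$. Expanding $A_i(0)A_j(0)=(\Li_1(x_i)-\zeta_n(1;x_i^{-1}))(\Li_1(x_j)-\zeta_n(1;x_j^{-1}))$ and summing against $(x_1x_2x_3)^n/n^q$, the pure $\zeta_n\zeta_n$ pieces produce exactly $\sum_{i<j}S_{1,1;q}(x_i^{-1},x_j^{-1};x_1x_2x_3)$, which is the left-hand side. The mixed pieces $\Li_1(x_i)\zeta_n(1;x_j^{-1})$ yield the symmetric combinations $(\Li_1(x_i)+\Li_1(x_j))S_{1;q}(x_k^{-1};x_1x_2x_3)$; the $A_i'(0)=-\Li_2(x_i)-\zeta_n(2;x_i^{-1})$ layer supplies $S_{2;q}(x_i^{-1};x_1x_2x_3)$ together with $\Li_2(x_i)\Li_q(x_1x_2x_3)$; the $qA_i(0)/n^{q+1}$ layer contributes $qS_{1;q+1}(x_i^{-1};x_1x_2x_3)$ and $q\Li_1(x_i)\Li_{q+1}(x_1x_2x_3)$; and the $\binom{q+1}{2}/n^{q+2}$ layer gives $\binom{q+1}{2}\Li_{q+2}(x_1x_2x_3)$. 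Merging these with the $\Li$-only contributions of $\Res_0$ and simplifying signs reproduces the stated identity term-by-term.

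The principal obstacle is not any analytic subtlety—growth, convergence, and applicability of Lemma \ref{lem-redisue-thm} are already established in Section 2—but rather combinatorial bookkeeping: one must simultaneously track the overall $(-1)^q$, the binomial weights $q$ and $\binom{q+1}{2}$, the three-way symmetry over $\{x_1,x_2,x_3\}$, and the four distinct layers of the pole at $-n$ against the four pieces of the Laurent tail at $0$, so that the quadratic sums $S_{1,1;q}(x_i^{-1},x_j^{-1};x_1x_2x_3)$ cleanly isolate on the left while every remaining term matches the precise linear combination of $S_{2;q}$, $S_{1;q+1}$, $\Li\cdot S_{1;q}$, $\Li_{q+2}$, $\Li_p\Li_{q+2-p}$, and triple $\Li$-products appearing on the right-hand side.
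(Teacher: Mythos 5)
Your proposal is correct and follows essentially the same route as the paper: the identical kernel $\phi(s;x_1)\phi(s;x_2)\phi(s;x_3)/s^q$, the same pole structure (order $3$ at $s=-n$, order $q+3$ at $s=0$, no poles at positive integers), and the same residue extraction, merely repackaged via the generating functions $A_i(u)$ and $B_i(s)$, which is a notationally cleaner but mathematically equivalent organization of the paper's Laurent expansions \eqref{Lexp-phi--n}. The layer-by-layer accounting you describe (the $u^{-3}$, $u^{-2}\sum A_i$, $u^{-1}\sum A_iA_j$ layers against the binomial tail of $s^{-q}$) reproduces exactly the paper's $\Res(G_{1^3,q}(s),-n)$ and $\Res(G_{1^3,q}(s),0)$, so the assembly step goes through as claimed.
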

\begin{proof}
The proof of this theorem is based on considering the residue calculation of the following contour integral:
\[\oint\limits_{\left( \infty  \right)}G_{1^3,q}(s)ds:= \oint\limits_{\left( \infty  \right)}\frac{\phi(s;x_1)\phi(s;x_2)\phi(s;x_3)}{s^q} ds=0.\]
Clearly, the function $G_{1^3,q}(s)$ only singularities are poles at the non-positive integers. At a positive integer $n\in \N$, the pole $s=-n$ is a pole of order $3$. Applying \eqref{Lexp-phi--n}, we have

\begin{align*}
G_{1^3,q}(s)&=\frac1{(s+n)^3}\frac{(x_1x_2x_3)^n}{s^q}+\frac{\sum_{j=1}^3\left(\Li_{1}(x_j)-\zeta_n\Big(1;x_j^{-1}\Big)\right)}{(s+n)^2}\frac{(x_1x_2x_3)^n}{s^q}\\
&\quad+\frac{\sum_{1\leq i< j\leq 3} \left(\Li_1(x_i)\Li_1(x_j)+\zeta_n\Big(1;x_i^{-1}\Big)\zeta_n\Big(1;x_j^{-1}\Big)\right)}{s+n}\frac{(x_1x_2x_3)^n}{s^q}\\
&\quad-\frac{\sum_{\sigma\in \{(123),(132),(231)\}} \left(\Li_{1}(x_{\sigma(1)})+\Li_1(x_{\sigma(2)})\right)\zeta_n\Big(1;x_{\sigma(3)}^{-1}\Big)}{s+n}\frac{(x_1x_2x_3)^n}{s^q}\\
&\quad-\frac{\sum_{j=1}^3\left(\Li_{2}(x_j)+\zeta_n\Big(2;x_j^{-1}\Big)\right)}{s+n}\frac{(x_1x_2x_3)^n}{s^q}+\cdots.
\end{align*}
By a simple residue computation, one obtains
\begin{align*}
\Res\left(G_{1^3,q}(s),-n\right)&=\frac1{2!} \lim_{s\rightarrow -n} \frac{d^2}{ds^2}\left((s+n)^3G_{1^3,q}(s) \right)\\
&=(-1)^q \frac{q(q+1)}{2} \frac{(x_1x_2x_3)^n}{n^{q+2}}+(-1)^q q \sum_{j=1}^3\left(\Li_{1}(x_j)-\zeta_n\Big(1;x_j^{-1}\Big)\right)\frac{(x_1x_2x_3)^n}{n^{q+1}}\\
&\quad-(-1)^q\sum_{\sigma\in \{(123),(132),(231)\}} \left(\Li_{1}(x_{\sigma(1)})+\Li_1(x_{\sigma(2)})\right)\zeta_n\Big(1;x_{\sigma(3)}^{-1}\Big) \frac{(x_1x_2x_3)^n}{n^{q}}\\
&\quad+(-1)^q\sum_{1\leq i< j\leq 3} \left(\Li_1(x_i)\Li_1(x_j)+\zeta_n\Big(1;x_i^{-1}\Big)\zeta_n\Big(1;x_j^{-1}\Big)\right)\frac{(x_1x_2x_3)^n}{n^{q}}\\
&\quad-(-1)^q\sum_{j=1}^3 \left(\Li_{2}(x_j)+\zeta_n\Big(2;x_j^{-1}\Big)\right)\frac{(x_1x_2x_3)^n}{n^{q}}.
\end{align*}
The pole $s=0$ has order $q+3$, the residue is
\begin{align*}
\Res\left(G_{1^3,q}(s),0\right)&=\frac1{(q+2)!} \lim_{s\rightarrow 0} \frac{d^{q+2}}{ds^{q+2}}\left(s^{q+3} G_{1^3,q}(s)\right)\\
&=(-1)^{q+1} \sum_{j=1}^3 \Li_{q+2}(x_j)+(-1)^q\sum_{1\leq i<j\leq3}\sum_{k_i+k_j=q,\atop k_i,k_j\geq 0} \Li_{k_{i}+1}(x_i) \Li_{k_{j}+1}(x_j)\\
&\quad+(-1)^{q-1}\sum_{k_1+k_2+k_3=q-1,\atop k_1,k_2,k_3\geq 0} \Li_{k_1+1}(x_1)\Li_{k_2+1}(x_2)\Li_{k_3+1}(x_3).
\end{align*}
Finally, applying Lemma \ref{lem-redisue-thm} and combining these two contributions yields the statement of Theorem \ref{thm-quadratic-G-ES-one}.
\end{proof}

Setting $x_1=x_2=x_3=x$ in Theorem \ref{thm-quadratic-G-ES-one} gives the following corollary.

\begin{cor} For positive integer $q$ and complex number $x$ with $0<|x|\leq 1$ and $x\neq 1$, we have
\begin{align}\label{equ-GES-Quar11}
S_{1,1;q}\Big(x^{-1},x^{-1};x^3\Big)&=qS_{1;q+1}\Big(x^{-1};x^3\Big)+S_{2;q}\Big(x^{-1};x^3\Big)+2\Li_1(x)S_{1;q}\Big(x^{-1};x^3\Big)+\Li_{q+2}(x)\nonumber\\
&\quad-\frac{q(q+1)}{6}\Li_{q+2}(x^3)-q\Li_1(x)\Li_{q+1}(x^3)+\Li_2(x)\Li_q(x^3)-\Li_1^2(x)\Li_q(x^3)\nonumber\\
&\quad-\sum_{i+j=q,\atop i,j\geq 0} \Li_{i+1}(x)\Li_{j+1}(x)+\frac1{3}\sum_{i+j+k=q-1,\atop i,j,k\geq 0} \Li_{i+1}(x)\Li_{j+1}(x)\Li_{k+1}(x).
\end{align}
\end{cor}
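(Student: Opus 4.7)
The plan is to deduce the corollary as a direct specialization of Theorem~\ref{thm-quadratic-G-ES-one} obtained by setting $x_1 = x_2 = x_3 = x$, after which the symmetry of the resulting identity collapses every indexed sum and permits division by $3$.

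First I would observe that under this specialization the left-hand side of Theorem~\ref{thm-quadratic-G-ES-one} consists of $\binom{3}{2}=3$ identical terms, so it yields $3\,S_{1,1;q}(x^{-1},x^{-1};x^3)$. On the right-hand side, each sum indexed over the set $\{1,2,3\}$ or over its pairs collapses in the obvious way: the three $S_{1;q+1}(x_j^{-1};\cdot)$ terms each become the same object, producing a factor of $3$; the same occurs for the three $S_{2;q}(x_j^{-1};\cdot)$ terms; the three mixed terms $(\Li_1(x_i)+\Li_1(x_j))S_{1;q}(x_k^{-1};\cdot)$ combine into $6\Li_1(x)S_{1;q}(x^{-1};x^3)$; the bilinear expression in the $\Li_1$'s collapses to $3\Li_1^2(x)\Li_q(x^3)$; the linear sums $\sum_j \Li_{q+2}(x_j)$ and $\sum_j \Li_2(x_j)\Li_q(x_1x_2x_3)$ each acquire a factor of $3$; and $q(\Li_1(x_1)+\Li_1(x_2)+\Li_1(x_3))\Li_{q+1}(x_1x_2x_3)$ becomes $3q\Li_1(x)\Li_{q+1}(x^3)$.

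Second I would handle the convolution terms. Each of the three double convolutions $\sum_{k_i+k_j=q}\Li_{k_i+1}(x_i)\Li_{k_j+1}(x_j)$ reduces to the same sum $\sum_{k_1+k_2=q}\Li_{k_1+1}(x)\Li_{k_2+1}(x)$, contributing three identical copies, whereas the triple convolution $\sum_{k_1+k_2+k_3=q-1}\Li_{k_1+1}(x_1)\Li_{k_2+1}(x_2)\Li_{k_3+1}(x_3)$ collapses to a single copy of the corresponding diagonal sum. After all these collapses, every term on both sides is divisible by $3$, and dividing through by $3$ produces precisely the identity \eqref{equ-GES-Quar11}.

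There is no genuine obstacle here: no new contour integral or residue computation is required, since the corollary is a mechanical symmetrization of Theorem~\ref{thm-quadratic-G-ES-one}. The only step warranting care is the accounting of numerical prefactors after the final division by $3$---notably the appearance of $\frac{q(q+1)}{6}$ in front of $\Li_{q+2}(x^3)$ (coming from $\frac{q(q+1)}{2}/3$) and the coefficient $\frac{1}{3}$ in front of the surviving triple product of polylogarithms, which alone retains a non-integer factor.
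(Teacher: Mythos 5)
Your proposal is correct and follows exactly the paper's route: the paper obtains the corollary precisely by setting $x_1=x_2=x_3=x$ in Theorem \ref{thm-quadratic-G-ES-one}, collapsing the symmetric sums, and dividing by $3$. Your bookkeeping of the collapsed multiplicities and of the resulting prefactors $\frac{q(q+1)}{6}$ and $\frac{1}{3}$ matches the stated identity.
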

If letting $x\rightarrow 1$, then through elementary calculations, we can obtain \cite[Thm.4.1]{Flajolet-Salvy}.

Finally, according to definition of cyclotomic quadratic Euler $T$-sums and cyclotomic triple $t$-values, for $(p_1,x_1),(q,x)\neq (1,1)$, we have
\begin{align*}
S_{p_1,p_2;q}(x_1,x_2;x)&=\sum_{n=1}^\infty \frac{\ze_n(p_1;x_1)\ze_n(p_2;x_2)}{n^q}x^n\\
&=\sum_{n=1}^\infty \frac{\left(\ze_n(p_1;x_1)-\Li_{p_1}(x_1)\right)\ze_n(p_2;x_2)}{n^q}x^n+\Li_{p_1}(x_1) \sum_{n=1}^\infty \frac{\ze_n(p_2;x_2)}{n^q}x^n\\
&=\sum_{n=1}^\infty \frac{\left(\ze_n(p_1;x_1)-\Li_{p_1}(x_1)\right)\ze_{n-1}(p_2;x_2)}{n^q}x^n+\sum_{n=1}^\infty \frac{\ze_n(p_1;x_1)-\Li_{p_1}(x_1)}{n^{p_2+q}}(x_2x)^n\\
&\quad+\Li_{p_1}(x_1) \sum_{n=1}^\infty \frac{\left(\ze_{n-1}(p_2;x_2)+\frac{x_2^n}{n^{p_2}}\right)}{n)^q}x^n\\
&=-\Li_{p_2,q,p_1}(x_2,x,x_1)-\Li_{p_2+q,p_1}(x_2x,x_1)+\Li_{p_1}(x_1)\left(\Li_{p_2,q}(x_2,x)+\Li_{p_2+q}(x_2x)\right).
\end{align*}
Therefore, we can derive the following corollary regarding the parity of cyclotomic triple zeta values with a direct calculation.
\begin{cor}\label{cor-quadratic-C-MZV-one}
Let $x,y,z$ be roots of unity, and $p,m,q\geq 1$ with $(p,x), (q,y)$ and $ (m,z)\neq (1,1)$. Then
\begin{align*}
\Li_{p,q,m}(x,y,z)+(-1)^{p+q+m}\Li_{p,q,m}\Big(x^{-1},y^{-1},z^{-1}\Big)
\end{align*}
reduces to a combination of cyclotomic double zeta values and cyclotomic single zeta values.
\end{cor}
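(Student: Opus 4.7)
The plan is to deduce the corollary from Theorem~\ref{thm-quadratic-C-ES-one} via the identity derived in the display immediately preceding the corollary, which writes a cyclotomic quadratic generalized Euler sum as a triple polylogarithm plus strictly lower-depth contributions:
\begin{align*}
S_{p_1,p_2;q}(x_1,x_2;x)&=-\Li_{p_2,q,p_1}(x_2,x,x_1)-\Li_{p_2+q,p_1}(x_2x,x_1)\\
&\quad+\Li_{p_1}(x_1)\bigl(\Li_{p_2,q}(x_2,x)+\Li_{p_2+q}(x_2x)\bigr).
\end{align*}

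First I would solve this identity for the triple polylogarithm under the specialization $(p_1,p_2,q)\mapsto(m,p,q)$ and $(x_1,x_2,x)\mapsto(z,x,y)$, obtaining
\[
\Li_{p,q,m}(x,y,z)=-S_{m,p;q}(z,x;y)-\Li_{p+q,m}(xy,z)+\Li_m(z)\bigl(\Li_{p,q}(x,y)+\Li_{p+q}(xy)\bigr),
\]
together with the companion formula for $\Li_{p,q,m}(x^{-1},y^{-1},z^{-1})$ obtained by inverting every argument.

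Next I would assemble the target combination $\Li_{p,q,m}(x,y,z)+(-1)^{p+q+m}\Li_{p,q,m}(x^{-1},y^{-1},z^{-1})$ and sort the resulting terms into three groups: (i) a quadratic Euler-sum combination $S_{m,p;q}(z,x;y)+(-1)^{m+p+q}S_{m,p;q}(z^{-1},x^{-1};y^{-1})$; (ii) a cyclotomic double-polylog combination $\Li_{p+q,m}(xy,z)+(-1)^{p+q+m}\Li_{p+q,m}((xy)^{-1},z^{-1})$, which is already of depth at most two; (iii) products of the shape $\Li_m(z^{\pm 1})\Li_{p,q}(x^{\pm 1},y^{\pm 1})$ and $\Li_m(z^{\pm 1})\Li_{p+q}((xy)^{\pm 1})$, which are products of cyclotomic single and double zeta values. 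Group (i) is then reduced by applying Theorem~\ref{thm-quadratic-C-ES-one} with its parameters matched as $p_1\mapsto m$, $p_2\mapsto p$, $x_1\mapsto z$, $x_2\mapsto x$, and its auxiliary variable set to $(xyz)^{-1}$, so that the theorem's ``$(xx_1x_2)^{-1}$'' coincides with our $y$; the right-hand side of that theorem consists solely of cyclotomic linear Euler sums and products of single polylogarithms, and by \eqref{doubleCESDPL} each cyclotomic linear Euler sum equals a cyclotomic double polylogarithm plus a single polylogarithm. Combining the three groups then yields the claim.

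The main obstacle is the bookkeeping: Theorem~\ref{thm-quadratic-C-ES-one} produces a substantial expression, each summand of which must be translated into the target class via \eqref{doubleCESDPL}, and one must verify the attendant non-degeneracy conditions. In particular the hypothesis $(q,xx_1x_2)\neq(1,1)$ of Theorem~\ref{thm-quadratic-C-ES-one} specializes here to $(q,y^{-1})\neq(1,1)$, i.e.\ $(q,y)\neq(1,1)$, which is a standing hypothesis of the corollary; the remaining conditions needed in group (ii) and in \eqref{doubleCESDPL} follow analogously from $(p,x),(m,z)\neq(1,1)$ and $p+q\geq 2$.
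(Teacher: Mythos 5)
Your proposal is correct and follows essentially the same route as the paper: the authors establish exactly the identity $S_{p_1,p_2;q}(x_1,x_2;x)=-\Li_{p_2,q,p_1}(x_2,x,x_1)-\Li_{p_2+q,p_1}(x_2x,x_1)+\Li_{p_1}(x_1)\bigl(\Li_{p_2,q}(x_2,x)+\Li_{p_2+q}(x_2x)\bigr)$ immediately before the corollary and then invoke Theorem \ref{thm-quadratic-C-ES-one} ``with a direct calculation,'' which is precisely your solve-and-substitute argument with the specialization $(p_1,p_2,q;x_1,x_2,x)\mapsto(m,p,q;z,x,(xyz)^{-1})$. Your bookkeeping of the convergence hypotheses and the sorting into the quadratic parity combination plus depth-$\le 2$ remainders is consistent with what the paper leaves implicit.
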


\section{Parity Results for Cyclotomic Cubic and Higher Order Euler Sums}

In this section, we employ the method of contour integration to derive explicit formulas for the parity of third-order cyclotomic Euler sums and provide a theorem concerning the parity of cyclotomic Euler sums of arbitrary order. We shall first provide the parity formulas for two cyclotomic or generalized cubic Euler sums.

\begin{thm}\label{thm-parityc-triple-ES-one} Let $x,x_1,x_2,x_3$ be roots of unity with $x_1,x_2,x_3\neq 1$, and $p\in \N$ with $(q,xx_1x_2x_3)\neq (1,1)$, we have
\begin{align}\label{equ-parityc-triple-ES-one}
	&S_{1,1,1;q}\left(x_1,x_2,x_3;\frac{1}{xx_1x_2x_3}\right)+\left(-1\right)^qS_{1,1,1;q}\left(x_{1}^{-1},x_{2}^{-1},x_{3}^{-1};{xx_1x_2x_3}\right)\nonumber\\
&=\Li_q\left(\frac{1}{xx_1x_2x_3}\right)\prod_{i=1}^{3}\left(\Li_1\left(x_i\right)\right)+\left(-1\right)^q\Li_q\left(xx_1x_2x_3\right)\prod_{i=1}^{3}\left(\Li_1\left(x_i\right)\right)\nonumber\\
&\quad-\sum_{\sigma\in\left\{\left(123\right),\left(132\right),\left(231\right)\right\}}\Li_1\left(x_{\sigma\left(1\right)}\right)\Li_1\left(x_{\sigma\left(2\right)}\right)
\left(S_{1;q}\left(x_{\sigma\left(3\right)};\frac{1}{xx_1x_2x_3}\right)-\Li_{q+1}\left(\frac{x_{\sigma\left(3\right)}}{xx_1x_2x_3}\right)\right)\nonumber\\
&\quad+\sum_{\sigma\in\left\{\left(123\right),\left(213\right),\left(312\right)\right\}}\Li_1\left(x_{\sigma\left(1\right)}\right)
\begin{Bmatrix}	S_{1,1;q}\left(x_{\sigma\left(2\right)},x_{\sigma\left(3\right)};\frac{1}{xx_1x_2x_3}\right)-S_{1;q+1}\left(x_{\sigma\left(2\right)};\frac{x_{\sigma\left(3\right)}}{xx_1x_2x_3}\right)\\
-S_{1;q+1}\left(x_{\sigma\left(3\right)};\frac{x_{\sigma\left(2\right)}}{xx_1x_2x_3}\right)+\Li_{q+2}\left(\frac{x_{\sigma\left(2\right)}x_{\sigma\left(3\right)}}{xx_1x_2x_3}\right)
\end{Bmatrix}\nonumber\\
&\quad+\sum_{\sigma\in\left\{\left(123\right),\left(213\right),\left(312\right)\right\}}S_{1,1;q+1}\left(x_{\sigma\left(1\right)},x_{\sigma\left(2\right)};\frac{x_{\sigma\left(3\right)}}{xx_1x_2x_3}\right)+\left(-1\right)^q\binom{q+2}{3}\Li_{q+3}\left(xx_1x_2x_3\right)\nonumber\\
&\quad-\sum_{\sigma\in\left\{\left(123\right),\left(213\right),\left(312\right)\right\}}S_{1;q+2}\left(x_{\sigma\left(1\right)};\frac{x_{\sigma\left(2\right)}x_{\sigma\left(3\right)}}{xx_1x_2x_3}\right)+\Li_{q+3}\left(\frac{1}{x}\right)\nonumber\\
&\quad+\sum_{i=1}^{3}\sum_{k=0}^{2}\binom{q-k+1}{q-1}\left(-1\right)^{k+q}\Li_{k+1}\left(x_{i}\right)\Li_{q-k+2}\left(xx_{1}x_2x_3\right)\nonumber\\
&\quad-\sum_{i=1}^{3}\sum_{k=0}^{2}\binom{q-k+1}{q-1}\left(-1\right)^{q}S_{k+1;q-k+2}\left(x_{i}^{-1};xx_{1}x_2x_3\right)\nonumber\\
&\quad+\sum_{\sigma\in\left\{\left(23\right),\left(12\right),\left(13\right)\right\}}\sum_{0\le k_1+k_2\le1}\left(-1\right)^{k_1+k_2+q}\binom{q-k_1-k_2}{q-1}\nonumber\\
&\qquad\qquad\qquad\qquad\times\Li_{k_1+1}\left(x_{\sigma\left(1\right)}\right)\Li_{k_2+1}\left(x_{\sigma\left(2\right)}\right)\Li_{q-k_1-k_2+1}\left(xx_1x_2x_3\right)\nonumber\\
&\quad+\sum_{\sigma\in\left\{\left(23\right),\left(12\right),\left(13\right)\right\}}\sum_{0\le k_1+k_2\le1}\binom{q-k_1-k_2}{q-1}\nonumber\\
&\qquad\qquad\qquad\times\begin{Bmatrix}
	\left(-1\right)^{k_1+q+1}\Li_{k_1+1}\left(x_{\sigma\left(1\right)}\right)S_{k_2+1;q-k_1-k_2+1}\left(x_{\sigma\left(2\right)}^{-1};xx_1x_2x_3\right)\\
	+\left(-1\right)^{k_2+q+1}\Li_{k_2+1}\left(x_{\sigma\left(2\right)}\right)S_{k_1+1;q-k_1-k_2+1}\left(x_{\sigma\left(1\right)}^{-1};xx_1x_2x_3\right)
\end{Bmatrix}\nonumber\\
&\quad+\sum_{\sigma\in\left\{\left(23\right),\left(12\right),\left(13\right)\right\}}\sum_{0\le k_1+k_2\le1}\left(-1\right)^{q}\binom{q-k_1-k_2}{q-1}S_{k_1+1,k_2+1;q-k_1-k_2+1}\left(x_{\sigma\left(1\right)}^{-1},x_{\sigma\left(2\right)}^{-1};xx_1x_2x_3\right)\nonumber\\
&\quad-\sum_{\sigma\in\left\{\left(123\right),\left(132\right),\left(231\right)\right\}}\left(-1\right)^q\Li_1\left({x_{\sigma\left(1\right)}}\right)
\Li_1\left({x_{\sigma\left(2\right)}}\right)S_{1;q}\left({x_{\sigma\left(3\right)}^{-1}};xx_1x_2x_3\right)\nonumber\\
&\quad+\sum_{\sigma\in\left\{\left(123\right),\left(213\right),\left(312\right)\right\}}\left(-1\right)^q\Li_1\left({x_{\sigma\left(1\right)}}\right)
S_{1,1;q}\left(x_{\sigma\left(2\right)}^{-1},x_{\sigma\left(3\right)}^{-1};xx_1x_2x_3\right)\nonumber\\
&\quad+\sum_{i=1}^{3}\sum_{0\le k_1+k_2\le1}\left(-1\right)^{k_2+q}\binom{q-k_1-k_2}{q-1}\left(\left(-1\right)^{k_1}\Li_{k_1+1}\left(x\right)-\Li_{k_1+1}\left(x^{-1}\right)\right)\nonumber\\
&\qquad\qquad\qquad\qquad\times\Li_{k_2+1}\left(x_i\right)\Li_{q-k_1-k_2+1}\left(xx_1x_2x_3\right)\nonumber\\
&\quad-\sum_{i=1}^{3}\sum_{0\le k_1+k_2\le1}\left(-1\right)^{q}\binom{q-k_1-k_2}{q-1}\left(\left(-1\right)^{k_1}\Li_{k_1+1}\left(x\right)-\Li_{k_1+1}\left(x^{-1}\right)\right)\nonumber\\
&\qquad\qquad\qquad\qquad\times S_{k_2+1;q-k_1-k_2+1}\left(x_i^{-1};xx_1x_2x_3\right)\nonumber\\
&\quad+\left(-1\right)^q\sum_{\sigma\in\left\{\left(23\right),\left(13\right),\left(12\right)\right\}}\left(\Li_{1}\left(x\right)-\Li_{1}\left(x^{-1}\right)\right)\Li_{1}\left(x_{\sigma\left(1\right)}\right)\Li_{1}\left(x_{\sigma\left(2\right)}\right)\Li_{q}\left(xx_{1}x_2x_3\right)\nonumber\\
&\quad-\left(-1\right)^q\sum_{\sigma\in\left\{\left(23\right),\left(13\right),\left(12\right),\atop\left(32\right),\left(31\right),\left(21\right)\right\}}
\left(\Li_{1}\left(x\right)-\Li_{1}\left(x^{-1}\right)\right)\Li_{1}\left(x_{\sigma\left(1\right)}\right)S_{1;q}\left(x^{-1}_{\sigma\left(2\right)};xx_{1}x_2x_3\right)\nonumber\\
&\quad+\left(-1\right)^q\sum_{\sigma\in\left\{\left(23\right),\left(13\right),\left(12\right)\right\}}\left(\Li_{1}\left(x\right)-\Li_{1}\left(x^{-1}\right)\right)
S_{1,1;q}\left(x^{-1}_{\sigma\left(1\right)},x^{-1}_{\sigma\left(2\right)};xx_{1}x_2x_3\right)\nonumber\\
&\quad+\left(-1\right)^q\sum_{k=0}^{2}\binom{q-k+1}{q-1}\left(\left(-1\right)^k\Li_{k+1}\left(x\right)-\Li_{k+1}\left(x^{-1}\right)\right)\Li_{q-k+2}\left(xx_{1}x_2x_3\right)\nonumber\\
&\quad+\left(-1\right)^q\Li_{q+3}\left(x\right)-\Li_{q+3}\left(x^{-1}\right)+\sum_{i=1}^{3}\left(-1\right)^q\Li_{q+3}\left(x_i\right)\nonumber\\
&\quad+\sum_{k_1+k_2+k_3=q,\atop k_1,k_2,k_3\ge0}\left(-1\right)^q\Li_{k_1+1}\left(x_1\right)\Li_{k_2+1}\left(x_2\right)\Li_{k_3+1}\left(x_3\right)\nonumber\\
&\quad+\sum_{\sigma\in\left\{\left(12\right),\left(13\right),\left(23\right)\right\}}\sum_{k_1+k_2=q+1,\atop k_1,k_2\ge0}\left(-1\right)^{q+1}\Li_{k_1+1}\left(x_{\sigma\left(1\right)}\right)\Li_{k_2+1}\left(x_{\sigma\left(2\right)}\right)\nonumber\\
&\quad+\sum_{i=1}^{3}\sum_{k_1+k_2=q+1,\atop k_1,k_2\ge0}\left(-1\right)^{k_1}\Li_{k_1+1}\left(x_{i}\right)\left(\left(-1\right)^{k_2}\Li_{k_2+1}\left(x\right)-\Li_{k_2+1}\left(x^{-1}\right)\right)\nonumber\\
&\quad+\sum_{\sigma\in\left\{\left(12\right),\left(13\right),\left(23\right)\right\}}\sum_{k_1+k_2+k_3=q,\atop k_1,k_2,k_3\ge0}\left(-1\right)^{k_1+k_2}\Li_{k_1+1}\left(x_{\sigma\left(1\right)}\right)\Li_{k_2+1}\left(x_{\sigma\left(2\right)}\right)\nonumber\\
&\qquad\qquad\qquad\qquad\times\left(\left(-1\right)^{k_3}\Li_{k_3+1}\left(x\right)-\Li_{k_3+1}\left(x^{-1}\right)\right)\nonumber\\
&\quad+\sum_{k_1+k_2+k_3+k_4=q-1,\atop k_1,k_2,k_3,k_4\ge0}\left(-1\right)^{k_1+k_2}\Li_{k_1+1}\left(x_{1}\right)\Li_{k_2+1}\left(x_{2}\right)\Li_{k_3+1}\left(x_{3}\right)\nonumber\\
&\qquad\qquad\qquad\qquad\times\left(\left(-1\right)^{k_4}\Li_{k_4+1}\left(x\right)-\Li_{k_4+1}\left(x^{-1}\right)\right).
\end{align}
\end{thm}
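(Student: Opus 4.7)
The plan is to apply Lemma \ref{lem-redisue-thm} to the contour integral
\[
\oint_{(\infty)} F_{1,1,1,q}(s)\,ds = \oint_{(\infty)} \frac{\Phi(s;x)\phi(s;x_1)\phi(s;x_2)\phi(s;x_3)}{s^q}\,ds = 0,
\]
which is the depth-three analogue of the integrals used for Theorems \ref{thm-double-CES} and \ref{thm-quadratic-C-ES-one}. Because $x,x_1,x_2,x_3$ are roots of unity and $\Phi,\phi$ grow only like $|s|^{\varepsilon}$ on the circles $|s|=N+1/2$, the kernel hypothesis of Lemma \ref{lem-redisue-thm} is satisfied and the sum of all residues vanishes. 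The singularities of $F_{1,1,1,q}(s)$ are the usual three families: simple poles at each $s=n>0$, poles of order $4$ at each $s=-n<0$ (one from $\Phi(s;x)$ and one from each $\phi(s;x_j)$), and a pole of order $q+4$ at $s=0$.

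For the residue at $s=n$ I would substitute the Taylor expansions \eqref{Lexp-phi-n} for each $\phi(s;x_j)$ and the regular part of \eqref{LEPhi-function} for $\Phi(s;x)$, yielding
\[
\Res(F_{1,1,1,q}(s),n) = \frac{(xx_1x_2x_3)^{-n}}{n^q}\prod_{j=1}^3\bigl(\Li_1(x_j)-\zeta_{n-1}(1;x_j)\bigr).
\]
Expanding this product and summing over $n\ge 1$ via \eqref{defn-GeneralizedEulerSums} produces the sum $S_{1,1,1;q}(x_1,x_2,x_3;(xx_1x_2x_3)^{-1})$ appearing on the left of \eqref{equ-parityc-triple-ES-one}, together with products of $\Li_1(x_j)$ with various $S_{\bullet;q}$ and $S_{\bullet,\bullet;q}$ sums carrying the argument $(xx_1x_2x_3)^{-1}$ and with $\Li_q((xx_1x_2x_3)^{-1})$.

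The hard step is the residue at $s=-n$, computed from $\Res(F_{1,1,1,q}(s),-n)=\frac{1}{3!}\frac{d^3}{ds^3}[(s+n)^4 F_{1,1,1,q}(s)]\big|_{s=-n}$. I would use \eqref{Lexp-phi--n} and \eqref{LEPhi-function} to write each factor as its singular part (one of $x_j^n/(s+n)$ or $x^n/(s+n)$) plus a Taylor series in $s+n$, and expand $1/s^q$ about $-n$ as $(-1)^q\sum_{k\ge 0}\binom{q+k-1}{q-1}n^{-q-k}(s+n)^k$. Multiplying the four Laurent series and extracting the coefficient of $(s+n)^3$ generates, after summation over $n\ge 1$, the second left-hand term $(-1)^q S_{1,1,1;q}(x_1^{-1},x_2^{-1},x_3^{-1};xx_1x_2x_3)$ from the trilinear $\prod_j \zeta_n(1;x_j^{-1})$ contribution, together with every $S$-sum and polylogarithmic term on the right-hand side of \eqref{equ-parityc-triple-ES-one} indexed by the permutation sets $\{(123),(132),(231)\}$, $\{(123),(213),(312)\}$, and $\{(12),(13),(23)\}$. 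The residue at $s=0$, of order $q+4$, is purely polylogarithmic and comes from a direct multinomial expansion of the full product at $0$.

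The main obstacle is combinatorial bookkeeping rather than analytic. Differentiating the fourfold product around $s=-n$ three times produces a large symmetric collection of terms, and matching them against the structure of \eqref{equ-parityc-triple-ES-one} requires careful tracking of which factor supplies its singular part $(s+n)^{-1}$ and which supplies which Taylor coefficient, with the residue combinatorics mirroring the symmetry patterns already observed in Theorems \ref{thm-double-CES-Comb} and \ref{thm-quadratic-G-ES-one}. Assembling the three residue contributions by Lemma \ref{lem-redisue-thm} and gathering like terms then yields \eqref{equ-parityc-triple-ES-one}.
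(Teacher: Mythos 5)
Your proposal is correct and follows essentially the same route as the paper: the same contour integral $\oint_{(\infty)}\Phi(s;x)\phi(s;x_1)\phi(s;x_2)\phi(s;x_3)s^{-q}\,ds=0$, the same classification of poles (simple at $s=n>0$, order $4$ at $s=-n$, order $q+4$ at $s=0$), the same expansions \eqref{Lexp-phi-n}, \eqref{Lexp-phi--n}, \eqref{LEPhi-function}, and the same residue formula at positive integers, with the identity assembled via Lemma \ref{lem-redisue-thm}. The only quibble is the phrase ``regular part of \eqref{LEPhi-function}'' at $s=n$ --- the simple pole there comes precisely from the singular part of $\Phi$, multiplied by the values of the regular factors --- but the residue you write down is the correct one, matching the paper.
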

\begin{proof}
Consider the contour integration
\begin{align*}
\oint\limits_{\left( \infty  \right)}F_{1^3,q}(s)ds:=\oint\limits_{\left( \infty  \right)} \frac{\Phi(s;x)\phi(s;x_1)\phi(s;x_2)\phi(s;x_3)}{s^q}ds=0.
\end{align*}
Clearly, the function $F_{1^3,q}(s)$ only singularities are poles at the integers. At a positive integer $n\in \N$, the pole $s=n$ is simple and by the expansions \eqref{Lexp-phi-n} and \eqref{LEPhi-function}, the residue is
\begin{align*}
	\Res\left(F_{1^3,q}(s),n\right)=&\lim\limits_{s\to n}\left(s-n\right)x^{-n} \left(\frac1{s-n}+\sum_{m=0}^\infty \Big((-1)^m\Li_{m+1}(x)-\Li_{m+1}\Big(x^{-1}\Big)\Big)(s-n)^m \right)\\
	&\times x_1^{-n}\sum_{k=0}^\infty  (-1)^k  \left( \Li_{k+1}(x_1)-\zeta_{n-1}\Big(k+1;x_1\Big)\right)(s-n)^k\\
	&\times x_2^{-n}\sum_{k=0}^\infty  (-1)^k  \left( \Li_{k+1}(x_2)-\zeta_{n-1}\Big(k+1;x_2\Big)\right)(s-n)^k\\
	&\times x_3^{-n}\sum_{k=0}^\infty  (-1)^k  \left( \Li_{k+1}(x_3)-\zeta_{n-1}\Big(k+1;x_3\Big)\right)(s-n)^ks^{-q}\\
	=&\left(xx_1x_2x_3\right)^{-n}\prod_{i=1}^{3}\left( \Li_{1}(x_i)-\zeta_{n-1}\Big(1;x_i\Big)\right)n^{-q}.
\end{align*}
For positive integer $n$, the pole $s=-n$ has order $4$. By \eqref{Lexp-phi--n} and \eqref{LEPhi-function}, after some rather lengthy calculations, we obtain the residue as
\begin{align*}
	&\Res\left(F_{1^3,q}(s),-n\right)=\frac1{3!} \lim_{s\rightarrow -n} \frac{d^{3}}{ds^{3}}\left((s+n)^{4} F_{1^3,q}(s)\right)\\
	&=\frac1{3!} \lim_{s\rightarrow -n} \frac{d^{3}}{ds^{3}}(s+n)^{4}\\
	&\quad\times\left\{\left(x_1^n\sum_{k=0}^\infty \left((-1)^k \Li_{k+1}(x_1)-\zeta_n\Big(k+1;x_1^{-1}\Big)\right)(s+n)^k +\frac{x_1^n}{s+n}\right)\right.\\
	&\quad\times\left(x_2^n\sum_{k=0}^\infty \left((-1)^k \Li_{k+1}(x_2)-\zeta_n\Big(k+1;x_2^{-1}\Big)\right)(s+n)^k +\frac{x_2^n}{s+n}\right)\\
	&\quad\times\left.\left(x_3^n\sum_{k=0}^\infty \left((-1)^k \Li_{k+1}(x_1)-\zeta_n\Big(k+1;x_3^{-1}\Big)\right)(s+n)^k +\frac{x_3^n}{s+n}\right)s^{-q}\right\}\\
	&=\left(-1\right)^q\binom{q+2}{3}\left(xx_1x_2x_3\right)^n/n^{q+3}\\
	&\quad+\left(xx_1x_2x_3\right)^n\sum_{i=1}^{3}\sum_{k=0}^{2}\left(-1\right)^q\binom{q+1-k}{q-1}\left(\left(-1\right)^k\Li_{k+1}\left(x_i\right)-\zeta_n\left(k+1;x_i^{-1}\right)\right)/n^{q-k+2}\\
	&\quad+\sum_{\sigma\in\left\{\left(23\right),\left(12\right),\left(13\right)\right\}}\sum_{0\le k_1+k_2\le1}\left(xx_1x_2x_3\right)^n\left(\left(-1\right)^{k_1}\Li_{k_1+1}\left(x_{\sigma\left(1\right)}\right)-\zeta_n\left(k_1+1;x_{\sigma\left(1\right)}^{-1}\right)\right)\\
	&\qquad\times\left(\left(-1\right)^{k_2}\Li_{k_2+1}\left(x_{\sigma\left(2\right)}\right)-\zeta_n\left(k_2+1;x_{\sigma\left(2\right)}^{-1}\right)\right)\binom{q-k_1-k_2}{q-1}\left(-1\right)^q/n^{q-k_1-k_2+1}\\
	&\quad+\left(xx_1x_2x_3\right)^n\prod_{i=1}^{3}\left(\Li_{1}\left(x_i\right)-\zeta_n\left(1;x_i^{-1}\right)\right)\left(-1\right)^q/n^q\\
	&\quad+\left(xx_1x_2x_3\right)^n\sum_{i=1}^{3}\sum_{0\le k_1+k_2\le1}\left(-1\right)^q\binom{q-k_1-k_2}{q-1}\left(\left(-1\right)^{k_1}\Li_{k_1+1}\left(x\right)-\Li_{k_1+1}\left(x^{-1}\right)\right)\\
	&\qquad\times\left(\left(-1\right)^{k_2}\Li_{k_2+1}\left(x_i\right)-\zeta_n\left(k_2+1;x_i^{-1}\right)\right)/n^{q-k_1-k_2+1}\\
	&\quad+\sum_{\sigma\in\left\{\left(23\right),\left(13\right),\left(12\right)\right\}}\left(xx_1x_2x_3\right)^n\left(\Li_1\left(x\right)-\Li_{1}\left(x^{-1}\right)\right)\left(\Li_1\left(x_{\sigma\left(1\right)}\right)-\zeta_{n}\left(1;x_{\sigma\left(1\right)}^{-1}\right)\right)\\
	&\qquad\times\left(\Li_1\left(x_{\sigma\left(2\right)}\right)-\zeta_{n}\left(1;x_{\sigma\left(2\right)}^{-1}\right)\right)\left(-1\right)^q/n^q\\
	&\quad+\sum_{k=0}^{2}\left(xx_1x_2x_3\right)^n\binom{q-k+1}{q-1}\left(\left(-1\right)^k\Li_{k+1}-\Li_{k+1}\left(x^{-1}\right)\right)\left(-1\right)^q/n^{q-k+2}.
\end{align*}
The pole $s=0$ has order $q+4$, the residue is
\begin{align*}
	&\Res\left(F_{1^3,q}(s),0\right)\\
	&=\left(-1\right)^q\Li_{q+3}\left(x\right)-\Li_{q+3}\left(x^{-1}\right)+\sum_{i=1}^{3}\left(-1\right)^q\Li_{q+3}\left(x_i\right)\\
	&\quad+\sum_{k_1+k_2+k_3=q,\atop k_1,k_2,k_3\ge0}\left(-1\right)^q\Li_{k_1+1}\left(x_1\right)\Li_{k_2+1}\left(x_2\right)\Li_{k_3+1}\left(x_3\right)\\
	&\quad+\sum_{\sigma\in\left\{\left(12\right),\left(13\right),\left(23\right)\right\}}\left(-1\right)^{q+1}\Li_{k_1+1}\left(x_{\sigma\left(1\right)}\right)\Li_{k_2+1}\left(x_{\sigma\left(2\right)}\right)\\
	&\quad+\sum_{i=1}^{3}\sum_{k_1+k_2=q+1,\atop k_1,k_2\geq 0}\left(-1\right)^{k_1}\Li_{k_1+1}\left(x_i\right)\left(\left(-1\right)^{k_2}\Li_{k_2+1}\left(x\right)-\Li_{k_2+1}\left(x^{-1}\right)\right)\\
	&\quad+\sum_{\sigma\in\left\{\left(23\right),\left(13\right),\left(12\right)\right\}}\sum_{k_1+k_2+k_3=q,\atop k_1,k_2,k_3\geq 0}\left(-1\right)^{k_1+k_2}\Li_{k_1+1}\left(x_{\sigma\left(1\right)}\right)\Li_{k_2+1}\left(x_{\sigma\left(2\right)}\right)\\
	&\qquad\times\left(\left(-1\right)^{k_3}\Li_{k_3+1}\left(x\right)-\Li_{k_3+1}\left(x^{-1}\right)\right)\\
	&\quad+\sum_{k_1+k_2+k_3+k_4=q-1,\atop k_1,k_2,k_3,k_4\geq 0}\left(-1\right)^{k_1+k_2+k_3}\Li_{k_1+1}\left(x_{1}\right)\Li_{k_2+1}\left(x_{2}\right)\Li_{k_3+1}\left(x_{3}\right)\\
	&\qquad\times\left(\left(-1\right)^{k_4}\Li_{k_4+1}\left(x\right)-\Li_{k_4+1}\left(x^{-1}\right)\right).
\end{align*}
By Lemma \ref{lem-redisue-thm}, we know that
\[\sum_{n=1}^\infty \left(\Res\left(F_{1^3,q}(s),n\right)+\Res\left(F_{1^3,q}(s),-n\right)\right)+\Res\left(F_{1^3,q}(s),0\right)=0.\]
Finally, combining these three contributions yields the statement of Theorem \ref{thm-parityc-triple-ES-one}.
\end{proof}

\begin{exa}As an example, considering $x=q=1$ and $x_1=x_2=x_3=-1$ in Theorem \ref{thm-parityc-triple-ES-one}, we have
	\begin{align*}
		&3S_{1,2;1}(-1,-1;-1)+3S_{2,1;1}(-1,-1;-1)-3S_{1,1;2}(-1,-1;1)+3S_{1,1;2}(-1,-1;-1)\\
		&=3\log^2(2)\zeta(2)+6\log(2)S_{1;2}(-1;1)-4\zeta(\bar{4})-\zeta(4)+6\zeta^2(\bar{2})-3\log(2)\zeta(\bar{3})+3S_{2;2}(-1;-1)\\
		&\quad+3S_{3;1}(-1;-1)-6\log(2)S_{1;2}(-1;-1)-6\zeta(\bar{2})S_{1;1}(-1;-1)-6\log(2)S_{2;1}(-1;-1)\\
		&\quad-6\zeta(2)S_{1;1}(-1;-1)+8\zeta(\bar{2})\zeta(2).
	\end{align*}
Applying \eqref{TripleCESTPL} gives
	\begin{align*}
	&6\zeta(\bar{2},\bar{1},\bar{1})+6\zeta(\bar{1},\bar{2},\bar{1})+6\zeta(\bar{1},\bar{1},\bar{2})-6\zeta(\bar{1},\bar{1},2)\\
	&=6\zeta(\bar{1},\bar{3})+8\zeta(4)+6\zeta^2(\bar{2})+3\zeta(\bar{2},\bar{2})-6\log(2)\zeta(\bar{1},\bar{2})-12\log(2)\zeta(3)-6\log(2)\zeta(\bar{2},\bar{1})\\
	&\quad+6\zeta(\bar{2})\zeta(\bar{1},\bar{1})-6\zeta(\bar{2})\zeta(2)-3\zeta(2,\bar{2})-6\zeta(3,\bar{1})-6\zeta(\bar{2},2)-12\zeta(\bar{1},3)+3\log^2(2)\zeta(2)\\
	&\quad+6\log(2)\zeta(\bar{1},2)+9\log(2)\zeta(\bar{3})+3\zeta(\bar{3},\bar{1})+3\zeta(2,2)-6\zeta(2)\zeta(\bar{1},\bar{1})-6\zeta^2(2)+8\zeta(2)\zeta(\bar{2})\\
	&\quad-13\zeta(\bar{4})-6\log(2)\zeta(\bar{3}).
	\end{align*}
\end{exa}

\begin{thm}\label{thm-parity-G-ES-one} For positive integer $q$ and $x_1,x_2,x_3,x_4$ are arbitrary complex numbers with $0<|x_1|, |x_2|,|x_3|,|x_4|\leq 1$ and $x_1,x_2,x_3,x_4 \neq 1$, we have
\begin{align}
	&\left(-1\right)^q\binom{q+2}{3}\Li_{q+3}\left(x_1x_2x_3x_4\right)-\left(-1\right)^q\sum_{\sigma\in\left\{\left(234\right),\left(134\right),\atop\left(124\right),\left(123\right)\right\}}
S_{1,1,1;q}\left(x^{-1}_{\sigma\left(1\right)},x^{-1}_{\sigma\left(2\right)},x^{-1}_{\sigma\left(3\right)};x_1x_2x_3x_4\right)\nonumber\\
	&+\left(-1\right)^q\sum_{i=1}^{4}\sum_{k=0}^{2}\binom{q-k+1}{q-1}\left(\left(-1\right)^k\Li_{k+1}\left(x_i\right)\Li_{q-k+2}\left(x_1x_2x_3x_4\right)-
S_{k+1;q-k+2}\left(x_i^{-1};x_1x_2x_3x_4\right)\right)\nonumber\\
	&+\left(-1\right)^q\sum_{\sigma\in\left\{\left(34\right),\left(24\right),\left(23\right),\atop\left(14\right),\left(12\right),\left(13\right)\right\}}\sum_{0\le k_1+k_2\le1}\binom{q-k_1-k_2}{q-1}\nonumber\\
	&\quad\times\left(-1\right)^{k_1+k_2}\Li_{k_1+1}\left(x_{\sigma\left(1\right)}\right)\Li_{k_2+1}\left(x_{\sigma\left(2\right)}\right)\Li_{q-k_1-k_2+1}\left(x_1x_2x_3x_4\right)\nonumber\\
	&+\left(-1\right)^q\sum_{\sigma\in\left\{\left(34\right),\left(24\right),\left(23\right),\atop\left(14\right),\left(12\right),\left(13\right)\right\}}\sum_{0\le k_1+k_2\le1}\binom{q-k_1-k_2}{q-1}\nonumber\\
	&\quad\times\begin{Bmatrix}
		\left(-1\right)^{k_1+1}\Li_{k_1+1}\left(x_{\sigma\left(1\right)}\right)S_{k_2+1;q-k_1-k_2+1}\left(x_{\sigma\left(2\right)}^{-1};x_1x_2x_3x_4\right)\\
		+\left(-1\right)^{k_2+1}\Li_{k_2+1}\left(x_{\sigma\left(2\right)}\right)S_{k_1+1;q-k_1-k_2+1}\left(x_{\sigma\left(1\right)}^{-1};x_1x_2x_3x_4\right)
	\end{Bmatrix}\nonumber\\
	&+\left(-1\right)^q\sum_{\sigma\in\left\{\left(34\right),\left(24\right),\left(23\right),\atop\left(14\right),\left(12\right),\left(13\right)\right\}}\sum_{0\le k_1+k_2\le1}\binom{q-k_1-k_2}{q-1}S_{k_1+1,k_2+1;q-k_1-k_2+1}\left(x_{\sigma\left(1\right)}^{-1},x_{\sigma\left(2\right)}^{-1};x_1x_2x_3x_4\right)\nonumber\\
	&+\left(-1\right)^q\sum_{\sigma\in\left\{\left(234\right),\left(134\right),\atop\left(124\right),\left(123\right)\right\}}\Li_{1}\left(x_{\sigma\left(1\right)}\right)
\Li_{1}\left(x_{\sigma\left(2\right)}\right)\Li_{1}\left(x_{\sigma\left(3\right)}\right)\Li_{q}\left(x_1x_2x_3x_4\right)\nonumber\\
	&+\left(-1\right)^{q+1}\sum_{\sigma\in\left\{\left(234\right),\left(134\right),\left(124\right),\left(123\right),\left(243\right),\left(143\right),\atop\left(142\right),
\left(132\right),\left(342\right),\left(341\right),\left(241\right),\left(231\right)\right\}}\Li_{1}\left(x_{\sigma\left(1\right)}\right)\Li_{1}\left(x_{\sigma\left(2\right)}\right)
S_{1;q}\left(x_{\sigma\left(3\right)}^{-1};x_1x_2x_3x_4\right)\nonumber\\
	&+\left(-1\right)^q\sum_{\sigma\in\left\{\left(234\right),\left(134\right),\left(124\right),\left(123\right),\left(324\right),\left(314\right),\atop\left(214\right),
\left(213\right),\left(423\right),\left(413\right),\left(412\right),\left(312\right)\right\}}\Li_1\left(x_{\sigma\left(1\right)}\right)
S_{1,1;q}\left(x_{\sigma\left(2\right)}^{-1},x_{\sigma\left(3\right)}^{-1};x_1x_2x_3x_4\right)\nonumber\\
	&+\sum_{i=1}^{4}\left(-1\right)^q\Li_{q+3}\left(x_i\right)+\sum_{\sigma\in\left\{\left(34\right),\left(24\right),\left(23\right),\atop\left(14\right),\left(12\right),\left(13\right)\right\}}\sum_{k_1+k_2=q+1,\atop k_1,k_2\ge0}\left(-1\right)^{q+1}\Li_{k_1+1}\left(x_{\sigma\left(1\right)}\right)\Li_{k_2+1}\left(x_{\sigma\left(2\right)}\right)\nonumber\\
	&+\sum_{\sigma\in\left\{\left(234\right),\left(134\right),\atop\left(124\right),\left(123\right)\right\}}\sum_{k_1+k_2+k_3=q,\atop k_1,k_2,k_3\ge0}\left(-1\right)^{q}\Li_{k_1+1}\left(x_{\sigma\left(1\right)}\right)\Li_{k_2+1}\left(x_{\sigma\left(2\right)}\right)\Li_{k_3+1}\left(x_{\sigma\left(3\right)}\right)\nonumber\\
	&+\sum_{k_1+k_2+k_3+k_4=q-1,\atop k_1,k_2,k_3,k_4\ge0}\left(-1\right)^{q-1}\Li_{k_1+1}\left(x_{1}\right)\Li_{k_2+1}\left(x_{2}\right)\Li_{k_3+1}\left(x_{3}\right)\Li_{k_4+1}\left(x_{4}\right)=0
\end{align}
\end{thm}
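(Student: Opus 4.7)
The plan is to apply the residue-theorem method of Theorem \ref{thm-quadratic-G-ES-one}, now one order higher, by considering the vanishing contour integral
\begin{align*}
\oint_{(\infty)} G_{1^4,q}(s)\,ds := \oint_{(\infty)} \frac{\phi(s;x_1)\phi(s;x_2)\phi(s;x_3)\phi(s;x_4)}{s^q}\,ds = 0.
\end{align*}
Since each $\phi(s;x_i)$ is $O(|s|^\varepsilon)$ on the circles $|s|=n+1/2$ and $q\geq 1$, the integrand is a legitimate kernel function and Lemma \ref{lem-redisue-thm} applies. As $\phi(s;x_i)$ has only simple poles at the non-positive integers, the singularities of $G_{1^4,q}(s)$ are confined to $s\in\{0,-1,-2,\ldots\}$, so the residue identity becomes $\sum_{n=1}^\infty \Res(G_{1^4,q}(s),-n)+\Res(G_{1^4,q}(s),0)=0$.

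Next, I would expand each $\phi(s;x_i)$ around $s=-n$ with \eqref{Lexp-phi--n}, writing it as $\frac{x_i^n}{s+n}+x_i^n\sum_{k\geq 0}\bigl((-1)^k\Li_{k+1}(x_i)-\zeta_n(k+1;x_i^{-1})\bigr)(s+n)^k$, and expand $s^{-q}$ around $s=-n$ via the geometric series giving the familiar coefficient $(-1)^j\binom{j+q-1}{q-1}/n^{q+j}$. Because $s=-n$ is a pole of order exactly $4$ (one from each factor), its residue is
\begin{align*}
\Res(G_{1^4,q}(s),-n)=\frac{1}{3!}\lim_{s\to -n}\frac{d^3}{ds^3}\bigl[(s+n)^4 G_{1^4,q}(s)\bigr].
\end{align*}
The contributions decompose by how many of the four $\phi$-factors contribute their polar $1/(s+n)$ term: four give the leading $(x_1x_2x_3x_4)^n/n^{q+3}$ term, three give the single-polylog terms, two give the pairwise sums indexed by $\sigma\in\{(12),\ldots,(34)\}$, and so on. Summing over $n$ then collapses the factors $\zeta_n(a;x_i^{-1})$ with a factor of $(x_1x_2x_3x_4)^n/n^b$ into generalized Euler sums $S_{a;b}(x_i^{-1};x_1x_2x_3x_4)$ and $S_{a_1,a_2;b}(x_{\sigma(1)}^{-1},x_{\sigma(2)}^{-1};x_1x_2x_3x_4)$, and likewise produces the depth-three $S_{1,1,1;q}$ terms. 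The residue at $s=0$, where the pole has order $q+4$, is computed by differentiating $s^{q+4}G_{1^4,q}(s)$ a total of $q+3$ times and contributes only products of four polylogarithms $\Li_{k_i+1}(x_i)$ with total index $q+3$, along with the lower depth triple, double and single products.

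The main obstacle will be the combinatorial bookkeeping at $s=-n$: expanding $(s+n)^4 G_{1^4,q}(s)$ as a product of four Laurent series and $s^{-q}$, and then extracting the coefficient of $(s+n)^3$, generates a large number of terms indexed by how the three derivatives are distributed among the five analytic factors. The symmetric structure of the answer — visible in the ordered sums over $\sigma\in\{(234),(134),(124),(123)\}$ for depth three and over all $12$ ordered triples for the mixed $\Li_1\cdot\Li_1\cdot S_{1;q}$ blocks — reflects exactly the Leibniz distributions of derivatives. Once these contributions are collected, each $\zeta_n$ paired with $(x_1x_2x_3x_4)^n/n^b$ converts into the corresponding generalized Euler sum via \eqref{defn-GeneralizedEulerSums}, and combining with the residue at $s=0$ via Lemma \ref{lem-redisue-thm} yields the displayed identity of Theorem \ref{thm-parity-G-ES-one}.
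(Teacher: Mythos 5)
Your proposal follows essentially the same route as the paper: the same contour integral $\oint_{(\infty)}\phi(s;x_1)\phi(s;x_2)\phi(s;x_3)\phi(s;x_4)s^{-q}\,ds=0$, the same identification of order-$4$ poles at $s=-n$ and an order-$(q+4)$ pole at $s=0$, and the same conversion of the $\zeta_n$-weighted sums into generalized Euler sums before invoking Lemma \ref{lem-redisue-thm}. The only slip is the expansion coefficient of $s^{-q}$ at $s=-n$, which should carry $(-1)^q$ rather than $(-1)^j$, i.e.\ $(-1)^q\binom{j+q-1}{q-1}n^{-q-j}$; this is a detail the full computation would correct.
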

\begin{proof}
Consider the contour integral
\[\oint\limits_{\left( \infty  \right)}G_{1^4,q}(s)ds:=\oint\limits_{\left( \infty  \right)}\frac{\phi(s;x_1)\phi(s;x_2)\phi(s;x_3)\phi(s;x_4)}{s^q}ds=0.\]
Clearly, the function $G_{1^4,q}(s)$ only singularities are poles at the non-positive integers. At a positive integer $n\in \N$, the pole $s=-n$ is a pole of order $4$. Applying \eqref{Lexp-phi--n} and residue computation, we have
\begin{align*}
	&\Res\left(G_{1^4,q}(s),-n\right)=\frac1{3!} \lim_{s\rightarrow -n} \frac{d^3}{ds^3}\left((s+n)^4G_{1^4,q}(s) \right)\\
	&=(-1)^q \binom{q+2}{3} \frac{(x_1x_2x_3x_4)^n}{n^{q+3}}\\
	&\quad+(-1)^q\sum_{i=1}^{4}\sum_{k=0}^2\binom{q-k+1}{q-1} \left(\left(-1\right)^k\Li_{k+1}(x_i)-\zeta_n\Big(k+1;x_i^{-1}\Big)\right)\frac{(x_1x_2x_3x_4)^n}{n^{q-k+2}}\\
	&\quad+(-1)^q\sum_{\sigma\in \left\{(34),(24),(23),\atop\left(14\right),\left(12\right),\left(13\right)\right\}}\sum_{0\le k_1+k_2\le1}\binom{q-k_1-k_2}{q-1}\left(\left(-1\right)^{k_1}\Li_{k_1+1}(x_{\sigma\left(1\right)})-\zeta_n\Big(k_1+1;x_{\sigma\left(1\right)}^{-1}\Big)\right) \\
	&\qquad\times\left(\left(-1\right)^{k_2}\Li_{k_2+1}(x_{\sigma\left(2\right)})-\zeta_n\Big(k_2+1;x_{\sigma\left(2\right)}^{-1}\Big)\right) \frac{(x_1x_2x_3x_4)^n}{n^{q-k_1-k_2+1}}\\
	&\quad+(-1)^q\sum_{\sigma\in \left\{(234),(134),\atop\left(124\right),\left(123\right)\right\}}\left(\Li_{1}(x_{\sigma\left(1\right)})-\zeta_n\Big(1;x_{\sigma\left(1\right)}^{-1}\Big)\right)\left(\Li_{1}(x_{\sigma\left(2\right)})-\zeta_n\Big(1;x_{\sigma\left(2\right)}^{-1}\Big)\right) \\
	&\qquad\times\left(\Li_{1}(x_{\sigma\left(3\right)})-\zeta_n\Big(1;x_{\sigma\left(3\right)}^{-1}\Big)\right)\frac{(x_1x_2x_3x_4)^n}{n^{q}}.
\end{align*}
The pole $s=0$ has order $q+4$, the residue is
\begin{align*}
	&\Res\left(G_{1^4,q}(s),0\right)=\frac1{(q+3)!} \lim_{s\rightarrow 0} \frac{d^{q+3}}{ds^{q+3}}\left(s^{q+4} G_{1^4,q}(s)\right)\\
	&=(-1)^{q} \sum_{i=1}^4 \Li_{q+3}(x_i)+(-1)^{q+1}\sum_{\sigma\in\left\{\left(34\right),\left(24\right),\left(23\right),\atop\left(14\right),\left(12\right),\left(13\right)\right\}}\sum_{k_1+k_2=q+1,\atop k_i,k_j\geq 0} \Li_{k_{1}+1}\left(x_{\sigma\left(1\right)}\right) \Li_{k_{2}+1}\left(x_{\sigma\left(2\right)}\right)\\
	&\quad+(-1)^{q}\sum_{\sigma\in\left\{\left(234\right),\left(134\right),\atop\left(124\right),\left(123\right)\right\}}\sum_{k_1+k_2+k_3=q,\atop k_1,k_2,k_3\geq 0} \Li_{k_1+1}\left(x_{\sigma\left(1\right)}\right)\Li_{k_2+1}\left(x_{\sigma\left(2\right)}\right)\Li_{k_3+1}\left(x_{\sigma\left(3\right)}\right)\\
	&\quad+\left(-1\right)^{q-1}\sum_{k_1+k_2+k_3+k_4=q-1,\atop k_1,k_2,k_3,k_4\geq 0}\Li_{k_1+1}\left(x_{1}\right)\Li_{k_2+1}\left(x_{2}\right)\Li_{k_3+1}\left(x_{3}\right)\Li_{k_4+1}\left(x_{4}\right).
\end{align*}
Finally, applying Lemma \ref{lem-redisue-thm} and combining these two contributions yields the statement of Theorem \ref{thm-parity-G-ES-one}.
\end{proof}

\begin{exa}As an example, considering $q=2$ and $x_1=x_2=x_3=x_4=-1$ in Theorem \ref{thm-parity-G-ES-one}, we have
	\begin{align*}
		&2S_{1,1,1;2}\left(-1,-1,-1;1\right)\\
		&=6S_{1,1;3}\left(-1,-1;1\right)+3S_{2,1;2}\left(-1,-1;1\right)+3S_{1,2;2}\left(-1,-1;1\right)+2\zeta(5)-6\log(2)\zeta(4)\\
		&\quad-6S_{1;4}(-1;1)-4\zeta(3)\zeta(\bar{2})-4S_{2;3}(-1;1)+2\zeta(\bar{3})\zeta(2)-2S_{3;2}(-1;1)+6\log^2(2)\zeta(3)\\
		&\quad+6\zeta(\bar{2})\zeta(2)\log(2)+12\log(2)S_{1;3}(-1;1)+6\zeta(\bar{2})S_{1;2}(-1;1)+6\log(2)S_{2;2}(-1;1)\\
		&\quad-2\log^3(2)\zeta(2)-6\log^2(-1)S_{1;2}(-1;1)-6\log(2)S_{1,1;2}(-1,-1;1)+2\zeta(\bar{5})+6\zeta(\bar{4})\log(2)\\
		&\quad-6\zeta(\bar{2})\zeta(\bar{3})+6\zeta(\bar{3})\log^2(2)-6\zeta^2(\bar{2})\log(2)+2\zeta(\bar{2})\log^3(2).
	\end{align*}
Applying the stuffle relations, the alternating Euler sums in the above expression can all be written in terms of alternating multiple zeta values, thus leading to the following result
\begin{align*}
	&6\zeta(\bar{1},\bar{1},\bar{1},2)\\
	&=-6\log(2)\zeta(4)-6\zeta(\bar{1},4)-2\zeta(\bar{2})\zeta(3)-2\zeta(\bar{2},3)-6\zeta(\bar{5})+\zeta(\bar{3})\zeta(2)-2\zeta(\bar{3},2)\\
	&\quad+3\log^2(2)\zeta(3)+6\log(2)\zeta(\bar{1},3)+6\zeta(\bar{1},\bar{1},3)+3\zeta(2,3)+3\zeta(\bar{2})\zeta(2)\log(2)+3\zeta(\bar{2})\zeta(\bar{1},2)\\
	&\quad+3\log(2)\zeta(\bar{2},2)+12\log\left(2\right)\zeta(\bar{4})+3\zeta(\bar{2},\bar{1},2)+3\zeta(\bar{1},\bar{2},2)+3\zeta(3,2)+3\zeta(\bar{2},\bar{3})\\
	&\quad+9\zeta(\bar{1},\bar{4})+7\zeta(5)-\log^3(2)\zeta(2)-3\log^2(2)\zeta(\bar{1},2)-6\log(2)\zeta(\bar{1},\bar{1},2)-3\log(2)\zeta(2,2)\\
	&\quad-6\log(2)\zeta(\bar{1},\bar{3})-3\zeta(2,\bar{1},2)-3\zeta(\bar{1},2,2)-6\zeta(\bar{1},\bar{1},\bar{3})-3\zeta(2,\bar{3})-3\log(2)\zeta^2(\bar{2})+\log^3(2)\zeta(\bar{2}).
	\end{align*}
\end{exa}

Finally, we conclude by presenting a general statement regarding the parity results for cyclotomic Euler sums of arbitrary order.
\begin{thm}\label{thm-parityc-C-ES-one} Let $x,x_1,\ldots,x_r$ be roots of unity, and $p_1,\ldots,p_r,q\geq 1$ with $(p_j,x_j)\ (j=1,2,\ldots,r) $ and $ (q,xx_1\cdots x_r)\neq (1,1)$. We have
\begin{align*}
&(-1)^r S_{p_1,p_2,\ldots,p_r;q}\Big(x_1,x_2,\ldots,x_r;(xx_1\cdots x_r)^{-1}\Big)
\\&\quad+(-1)^{p_1+\cdots+p_r+q}S_{p_1,p_2,\ldots,p_r;q}\Big(x_1^{-1},x_2^{-1},\ldots,x_r^{-1};xx_1\cdots x_r\Big)
\end{align*}
reduces to a combination of sums of lower orders.
\end{thm}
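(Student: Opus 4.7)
The plan is to generalize the contour integral argument already carried out explicitly for $r=1,2,3$ in Theorems \ref{thm-double-CES}, \ref{thm-quadratic-C-ES-one}, and \ref{thm-parityc-triple-ES-one}. Specifically, I would consider the kernel
\[
F_{p_1 p_2\cdots p_r,q}(s) = \frac{\Phi(s;x)\,\phi^{(p_1-1)}(s;x_1)\cdots \phi^{(p_r-1)}(s;x_r)}{(p_1-1)!\cdots (p_r-1)!\,s^q}\,(-1)^{p_1+\cdots+p_r-r}
\]
and invoke Lemma \ref{lem-redisue-thm} to conclude $\sum_{n\ge 1}\Res(F,n)+\sum_{n\ge 1}\Res(F,-n)+\Res(F,0)=0$. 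The argument reduces to tracking which residues contribute Euler sums of order exactly $r$ and showing that all remaining contributions are of lower order or are pure polylogarithm products.

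At a positive integer $s=n$, the pole is simple, coming entirely from $\Phi(s;x)$, while each $\phi^{(p_j-1)}(s;x_j)$ is analytic. Using the leading term of \eqref{LEPhi-function} and \eqref{Lexp-phi-n-diffp-1}, the residue equals
\[
\frac{(xx_1\cdots x_r)^{-n}}{n^q}\prod_{j=1}^{r}\Bigl(\Li_{p_j}(x_j)-\zeta_{n-1}(p_j;x_j)\Bigr).
\]
Expanding the product and summing over $n$ produces $(-1)^r S_{p_1,\ldots,p_r;q}(x_1,\ldots,x_r;(xx_1\cdots x_r)^{-1})$ as the unique term that contains all $r$ factors $\zeta_{n-1}(p_j;x_j)$; every other term has at least one $\Li_{p_j}(x_j)$ factor pulled outside, leaving a generalized Euler sum of order at most $r-1$.

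At $s=-n$ the pole has order $p_1+\cdots+p_r+1$, with the $1/(s+n)^{p_j}$ part coming from $\phi^{(p_j-1)}(s;x_j)$ (cf.\ \eqref{Lexp-phi--n-diffp-1}) and the $1/(s+n)$ part coming from $\Phi(s;x)$. The residue is obtained as a $(p_1+\cdots+p_r)$th derivative at $s=-n$; by Leibniz, it splits into a sum over choices of how many derivatives hit each Taylor series. The unique term that produces an order-$r$ sum is the one in which for every $j$ we extract exactly the Taylor coefficient $(-1)^{p_j}\zeta_n(p_j;x_j^{-1})$ (i.e.\ the constant term of the regular part of $\phi^{(p_j-1)}/(p_j-1)!\cdot (-1)^{p_j-1}$), the $1/s^q$ contributes its $(-1)^q\binom{\cdot}{\cdot}/n^{\cdot}$ factor, and $\Phi(s;x)$ contributes its simple pole $x^n/(s+n)$. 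Summing over $n\ge 1$, this single term evaluates to $(-1)^{p_1+\cdots+p_r+q} S_{p_1,\ldots,p_r;q}(x_1^{-1},\ldots,x_r^{-1};xx_1\cdots x_r)$. In every other term of the Leibniz expansion, at least one factor contributes a Taylor coefficient $(-1)^k\Li_{k+1}(x)-\Li_{k+1}(x^{-1})$ from $\Phi$, or $(-1)^k\Li_{k+p_j}(x_j)$ from $\phi^{(p_j-1)}$; these $\Li$ pieces factor out of the $n$-sum, reducing the Euler-sum order by at least one. Finally, the residue at $s=0$ is a polynomial-in-$n$ free combination of polylogarithms $\Li_{\cdot}(x)$, $\Li_{\cdot}(x^{-1})$, $\Li_{\cdot}(x_j)$ and their products, so it contributes only order-$0$ (polylogarithm) terms.

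Combining the three contributions via Lemma \ref{lem-redisue-thm}, the two order-$r$ Euler sums isolated above appear with the signs stated in the theorem, and everything else is a $\mathbb{Q}$-linear combination of generalized Euler sums of order $\le r-1$ together with products of polylogarithms; by \eqref{doubleCESDPL}, \eqref{TripleCESTPL} and their higher-order analogues (which follow from the stuffle relations), the polylogarithm products themselves are cyclotomic Euler sums of order at most $r-1$. The main obstacle is purely bookkeeping: organizing the Leibniz expansion at $s=-n$ so that one can cleanly identify the unique ``pure'' term yielding an order-$r$ sum and certify that every other term drops the order. This is exactly the combinatorial pattern already visible in the fully-written Theorems \ref{thm-quadratic-C-ES-one} and \ref{thm-parityc-triple-ES-one}; the general proof is the same argument with multi-index notation in place of explicit enumeration, and no new analytic ingredient is required.
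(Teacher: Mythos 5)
Your proposal is correct and follows essentially the same route as the paper: the same contour integral $\oint F_{p_1\cdots p_r,q}(s)\,ds=0$, the same residue lemma, and the same identification of the unique ``pure'' terms at $s=n$ and $s=-n$ that carry the order-$r$ sums, with all other Leibniz/Taylor contributions dropping the order because a constant $\Li$ coefficient factors out of the $n$-sum. The only cosmetic difference is that you track the $(-1)^r$ sign and the $\zeta_{n-1}\to\zeta_n$ conversion slightly more explicitly than the paper's terse write-up, which is harmless.
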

\begin{proof}
To prove this general theorem, it is necessary to consider the residue computations of the following contour integral:
\begin{align*}
\oint\limits_{\left( \infty  \right)}F_{p_1p_2\cdots p_r,q}(s)ds:=\oint\limits_{\left( \infty  \right)} \frac{\Phi(s;x)\phi^{(p_1-1)}(s;x_1)\cdots\phi^{(p_r-1)}(s;x_r)}{(p_1-1)!\cdots (p_r-1)!s^q}(-1)^{p_1+\cdots+p_r-r} ds=0.
\end{align*}
Clearly, all integers are poles of the integrand in the entire complex plane, with $s=n\ (n\in\N)$ being simple poles, $s=-n\ (n\in \N)$ being poles of order $p_1+\cdots+p_r+1$, and $s=0$ being a pole of order $p_1+\cdots+p_r+q+1$. Applying Lemma \ref{lem-redisue-thm}, we have
\begin{align}\label{residue-sums-CESAO}
\sum_{n=1}^\infty \left(\Res\left(F_{p_1p_2\cdots p_r,q}(s),n\right)+\Res\left(F_{p_1p_2\cdots p_r,q}(s),-n\right)\right)+\Res\left(F_{p_1p_2\cdots p_r,q}(s),0\right)=0.
\end{align}
Applying \eqref{Lexp-phi--n-diffp-1}-\eqref{LEPhi-function} to compute the residues and then substituting these residue values into \eqref{residue-sums-CESAO} yields
\begin{align*}
&\sum_{n=1}^\infty \frac{\zeta_{n-1}(p_1;x_1)\zeta_{n-1}(p_2;x_2)\cdots \zeta_{n-1}(p_r;x_r)}{n^q}(xx_1\cdots x_r)^{-n}\\
&+(-1)^{p_1+\cdots+p_r+q}\sum_{n=1}^\infty \frac{\zeta_{n}(p_1;x^{-1}_1)\zeta_{n}(p_2;x^{-1}_2)\cdots \zeta_{n}(p_r;x^{-1}_r)}{n^q}(xx_1\cdots x_r)^{n}\\
&\quad+\{\text{combinations of lower-order sums}\}=0.
\end{align*}
Using the identity $\zeta_{n-1}(p;x) = \zeta_n(p;x) - x^n/n^p$ and the definition of cyclotomic Euler sums, the theorem can be proved through elementary calculations.
\end{proof}

Similarly, by considering the residue computation of the following general contour integral, one can obtain more results analogous to Theorems \ref{thm-double-CES-Comb} and \ref{thm-parityc-triple-ES-one}:
\begin{align*}
\oint\limits_{\left( \infty  \right)}G_{p_1p_2\cdots p_r,q}(s)ds:=\oint\limits_{\left( \infty  \right)} \frac{\phi^{(p_1-1)}(s;x_1)\cdots\phi^{(p_r-1)}(s;x_r)}{(p_1-1)!\cdots (p_r-1)!s^q}(-1)^{p_1+\cdots+p_r-r} ds=0.
\end{align*}
This paper will not undertake the calculation, but interested readers may attempt to do so.
\begin{re} Theorem \ref{thm-parityc-C-ESmain} is obtained by replacing $x$ with $(xx_1\cdots x_r)^{-1}$ in Theorem \ref{thm-parityc-C-ES-one}.
Indeed, the method of contour integration can fully provide explicit formulas for cyclotomic Euler sums of arbitrary order. However, due to the complexity of the formulas, no attempt was made to compute and present an explicit formula. All examples presented in this paper have been numerically verified for correctness using \emph{Mathematica}.
\end{re}

\begin{re}
The method presented in this paper can also be applied to investigate many other problems related to Dirichlet series of various forms. For instance, by considering the contour integral
\begin{align*}
\oint\limits_{\left( \infty  \right)} \frac{\Phi(s;x)\phi^{(p_1-1)}(s+a_1;x_1)\cdots\phi^{(p_r-1)}(s+a_r;x_r)}{(p_1-1)!\cdots (p_r-1)!(s+a)^q}(-1)^{p_1+\cdots+p_r-r} ds=0\quad(a,\forall a_j\notin \Z)
\end{align*}
under more general settings, it can be used to study the parity properties of multiple Hurwitz polylogarithms. This approach thereby encompasses the parity analysis of objects such as Hoffman's multiple $t$-values, Kaneko-Tsumura's multiple $T$-values, and their cyclotomic analogues.
\end{re}

\medskip

{\bf Declaration of competing interest.}
The authors declares that they has no known competing financial interests or personal relationships that could have
appeared to influence the work reported in this paper.

{\bf Data availability.}
No data was used for the research described in the article.

{\bf Acknowledgments.} Ce Xu gratefully acknowledges the invitation from Professor Chengming Bai of Nankai University to the Chern Institute of Mathematics and from Professor Shaoyun Yi of Xiamen University to the Tianyuan Mathematical Center in Southeast China (TMSE). This work commenced during these visits.

\end{document}